\definecolor{verylight}{gray}{0.97}
\definecolor{light}{gray}{0.9}
\definecolor{medium}{gray}{0.85}
\definecolor{dark}{gray}{0.6}
 \def\NZQ{\mathbb}               
 \def\FF{{\NZQ F}}
 \def\frk{\mathfrak}               
 \def\mm{{\frk m}}
 \def\G{{\mathcal G}}
 \def\Soc{{\mathbf Soc}}
 \def\opn#1#2{\def#1{\operatorname{#2}}} 
 \opn\chara{char} \opn\length{\ell} \opn\pd{pd} \opn\rk{rk}
 \opn\projdim{proj\,dim} \opn\injdim{inj\,dim} \opn\rank{rank}
 \opn\depth{depth} \opn\grade{grade} \opn\height{height}
 \opn\embdim{emb\,dim} \opn\codim{codim}
 \opn\Tr{Tr} \opn\bigrank{big\,rank}
 \opn\superheight{superheight}\opn\lcm{lcm}
 \opn\trdeg{tr\,deg}
 \opn\reg{reg} \opn\lreg{lreg} \opn\ini{in} \opn\lpd{lpd}
 \opn\size{size} \opn\sdepth{sdepth}
 \opn\link{link}\opn\fdepth{fdepth}\opn\lex{lex}
 \opn\tr{tr}
 \opn\type{type}
 \opn\gap{gap}
 \opn\arithdeg{arith-deg}
 \opn\Deg{Deg}
 \opn\sat{sat}
\opn\dist{dist}
\opn\soc{soc}
 \opn\div{div} \opn\Div{Div} \opn\cl{cl} \opn\Cl{Cl}
 \opn\Spec{Spec} \opn\Supp{Supp} \opn\supp{supp} \opn\Sing{Sing}
 \opn\Ass{Ass} \opn\Min{Min}\opn\Mon{Mon}
 \opn\Ann{Ann} \opn\Rad{Rad} \opn\Soc{Soc}
 \opn\Im{Im} \opn\Ker{Ker} \opn\Coker{Coker} \opn\Am{Am}
 \opn\Hom{Hom} \opn\Tor{Tor} \opn\Ext{Ext} \opn\End{End}
 \opn\Aut{Aut} \opn\id{id}
 \opn\nat{nat}
 \opn\pff{pf}
 \opn\Pf{Pf} \opn\GL{GL} \opn\SL{SL} \opn\mod{mod} \opn\ord{ord}
 \opn\Gin{Gin} \opn\Hilb{Hilb}\opn\sort{sort}
 \opn\PF{PF}\opn\Ap{Ap}
 \opn\mult{mult}
 \opn\bight{bight}
 \opn\aff{aff}
 \opn\relint{relint} \opn\st{st}
 \opn\lk{lk} \opn\cn{cn} \opn\core{core} \opn\vol{vol}  \opn\inp{inp} \opn\nilpot{nilpot}
 \opn\link{link} \opn\star{star}\opn\lex{lex}\opn\set{set}
 \opn\width{wd}
 \opn\Fr{F}
 \opn\QF{QF}
 \opn\G{G}
 \opn\type{type}\opn\res{res}
 \opn\conv{conv}
 \opn\Shad{Shad}
 \opn\gr{gr}
 \def\pot#1#2{#1[\kern-0.28ex[#2]\kern-0.28ex]}
 \opn\dirlim{\underrightarrow{\lim}}
 \opn\inivlim{\underleftarrow{\lim}}
 \let\sect=\cap
 \let\tensor=\otimes
 \let\iso=\cong
 \let\Union=\bigcup
 \let\Dirsum=\bigoplus
 \let\to=\rightarrow
 \def\Implies{\ifmmode\Longrightarrow \else
         \unskip${}\Longrightarrow{}$\ignorespaces\fi}
 \def\implies{\ifmmode\Rightarrow \else
         \unskip${}\Rightarrow{}$\ignorespaces\fi}
 \def\iff{\ifmmode\Longleftrightarrow \else
         \unskip${}\Longleftrightarrow{}$\ignorespaces\fi}
 \newtheorem{Theorem}{Theorem}[section]
 \newtheorem{Lemma}[Theorem]{Lemma}
 \newtheorem{Corollary}[Theorem]{Corollary}
 \newtheorem{Proposition}[Theorem]{Proposition}
 \newtheorem{Remark}[Theorem]{Remark}
 \newtheorem{Example}[Theorem]{Example}
 \let\epsilon\varepsilon
 \let\kappa=\varkappa
 \def\qed{\ifhmode\textqed\fi
       \ifmmode\ifinner\quad\qedsymbol\else\dispqed\fi\fi}
 \def\textqed{\unskip\nobreak\penalty50
        \hskip2em\hbox{}\nobreak\hfil\qedsymbol
        \parfillskip=0pt \finalhyphendemerits=0}
 \def\dispqed{\rlap{\qquad\qedsymbol}}
 \opn\dis{dis}
 \def\pnt{{\raise0.5mm\hbox{\large\bf.}}}
 \opn\Lex{Lex}
\begin{document}

\title {The socle module of a monomial ideal}

\author{Lizhong Chu, J\"urgen Herzog, Dancheng Lu$^*$}
\address{Lizhong Chu, School of  Mathematical Sciences, Soochow University, P.R.China}
\email{chulizhong@suda.edu.cn}

\address{J\"urgen Herzog, Fachbereich Mathematik, Universit\"at Duisburg-Essen, Campus Essen, 45117
Essen, Germany} \email{juergen.herzog@uni-essen.de}

\address{Dancheng Lu, School of  Mathematical Sciences, Soochow University, P.R.China}
\email{ludancheng@suda.edu.cn}

\dedicatory{ }

\begin{abstract}
For any ideal $I$ in a Noetherian local ring  or any graded ideal $I$ in a standard graded $K$-algebra over a field $K$,  we introduce the  socle module $\Soc(I)$,  whose graded components give us the socle of the powers of $I$. It is observed that $\Soc(I)$  is a  finitely generated module over the fiber cone of $I$. In the case that $S$ is the polynomial ring and all powers of $I\subseteq S$ have linear resolution,  we  define  the module $\Soc^*(I)$ which is a module over the Rees ring of $I$.  For the edge ideal of a graph and for classes of polymatroidal ideals we study the module structure of their socle modules.
\end{abstract}

\thanks{$^*$ Corresponding author}


\subjclass[2010]{Primary 13F20; Secondary  13H10}


\keywords{socle module,  edge ideals, polymatroidal ideals, fiber cone}

\maketitle

\setcounter{tocdepth}{1}


\section*{Introduction}

Let $(S, \mm)$ be a Noetherian local ring $S$ with maximal ideal $\mm$  or a standard graded $K$-algebra $S$ over a field $K$  with  maximal graded ideal $\mm$. Furthermore, let $I\subseteq \mm$ be an ideal which we assume to be graded if $S$ is standard graded. In this paper we study for each integer $m\geq 0$  the socle $(I^m:\mm)/I^m$ of the powers $I^m$ of $I$. Results in this direction can be found in \cite{H-H}, \cite{H-Q} and in \cite{L-T}.

It is clear that the multiplication with any  $f\in I^r$ induces a map $(I^m:\mm)/I^m\to (I^{m+r}:\mm)/I^{m+r}$, so that $\Soc(I)=\Dirsum_{m\geq 0} (I^m:\mm)/I^m$ has the structure of a graded $\mathcal{F}(I)$-module, where $\mathcal{F}(I)=  \Dirsum_{m\geq 0}I^m/\mm I^m$ is the fiber cone of $I$. In Proposition~\ref{finite} we notice that $\Soc(I)$ is a finitely generated $\mathcal{F}(I)$-module. The proof is based on the fact that,  up to a truncation, $\Soc(I)$ can be identified with an ideal of the associated graded ring $\gr_I(S)$ of $I$.

In this paper we are mostly interested in the module structure of $\Soc(I)$, when $I$ is a monomial ideal and in particular in the case when $I$ is the edge ideal of a graph or a polymatroidal ideal. In both of these cases, $I$ satisfies the Ratliff condition, which means that $(I^{m+1}:I)=I^m$ for all $m\geq 0$, see \cite[Lemma 2.12]{M-M-V} and \cite[Theorem 4.1]{H-Q}. In this case, $\Soc(I)$, without any truncation, may be viewed as an ideal of $\gr_I(S)$. As a consequence of Theorem~\ref{sum}, this fact can be used to show that if $G$ is a finite simple graph with connected components $G_1,\ldots,G_r$, then $\Soc(I(G))=\Soc(I(G_1))\Soc(I(G_2))\cdots \Soc(I(G_r))$. Here,  $I(H)$ denotes the edge ideal of a graph $H$.

If $I$ has a $d$-linear resolution, then, as observed in Proposition~\ref{d-1},  $(I:\mm) =I+\soc(I)$, where $\soc(I)$  is an ideal generated in degree $d-1$. Hence if all powers of $I$ have linear resolution, which is the case for polymatroidal ideals, we may define $\Soc^*(I)=\Dirsum_{m\geq 1}\soc(I^m)$, which is a graded module over the Rees ring of $I$. Note that $\Soc(I)\iso \Soc^*(I)/\mm \Soc^*(I)$.

The main result of Section~2 is Theorem~\ref{fiber}, where it is shown that if  $G$ is a unicyclic graph with the  unique odd cycle $C_{2k+1}$ and with edge ideal $I=I(G)$,  then $\Soc(I)\iso  \mathcal{F}(I)(-c-k)$. Here, $c$ is the number of edges which do not belong to the cycle $C_{2k+1}$ and which are not leaves. The proof of this theorem uses in a crucial way Lemma~\ref{connectedness} which follows from Theorem~6.1 and
Theorem~6.7 of Banerjee in \cite{B}. The generator of $\Soc(I)$ is of the form $u+I^{c+k+1}$,  where $u$ is a monomial of degree $2(c+k)+1$. It is clear that for any graph the generators of $\Soc(I)$ are of the form $u+I^m$ where $u$ is a monomial of degree $\geq 2m-1$. If the graph $G$ has more than one odd cycle, then $\Soc(I)$ may have generators $u+I^m$ with $\deg(u)>2m-1$, see Example~\ref{higher}. On the other hand, it is shown in Proposition~\ref{two} that if any two odd cycles of $G$ have distance at most one, then each monomial  $u\in (I^m:\mm)\setminus  I^m$
has degree $2m-1$.  Section 2 closes with an application of our results to the depth stability index of  the edge ideal of a connected nonbipartite graph, see Corollary~\ref{dstab}.

Section~3 is devoted to the study of $\Soc^*(I)$ when $I\subseteq K[x_1,\ldots,x_n]$ is a polymatroidal ideal. It follows from \cite[Corollary 3.5]{H-R-V} that $\Soc^*(I)\neq 0$ if and only if the analytic spread $\ell(I)$ of $I$ is $n$, and from \cite[Theorem 4.1]{H-Q}
that the least degree of a generator of $\Soc^*(I)$ is $< n$. We have some computational evidence that a much stronger statement holds, namely that  $\Soc^*(I)$ is generated in degree $<n$. The main result of Section~3 is Theorem~\ref{mainPLP}. There it is shown that indeed $\Soc^*(I)$ is generated in degree $< n$ for a PLP-polymatroidal ideal $I$ on the ground set $[n]$. The pruned lattice path polymatroids (PLP-polymatroidals for short)  were firstly introduced by J. Schweig in \cite{S}. Then the third author of  this paper in \cite{L} observed that this class of polymatroidal ideals   is  given  by a system of linear inequalities and they constitute a wide generalization of polymatroidal ideals with strong exchange properties whose polymatroidal ideals are essentially of Veronese type.  In Proposition~\ref{single} those Veronese type ideals are characterized for which $\Soc^*(I)$ is equi-generated. For the proof of these results one has to analyze carefully the system of linear inequalities defining  this type of polymatroids.

\section{Definition and basis properties of the socle module}

We define the {\em socle module} of $I$ to be
\[
\Soc(I)=\Dirsum_{m\geq 0}(I^m:\mm)/I^m.
\]
Note that $\Soc(I)$ is a graded  $\mathcal{F}(I)$-module, where  $\mathcal{F}(I)  =\Dirsum_{m\geq 0}I^m/\mm I^m$ is the fiber cone  of $I$.

\begin{Proposition}
\label{finite}
The module  $\Soc(I)$ is a finitely generated graded $\mathcal{F}(I)$-module.
\end{Proposition}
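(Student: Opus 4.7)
The plan is to realize $\Soc(I)$, up to a finite-dimensional piece in low degrees, as a graded ideal of the Noetherian ring $\gr_I(S)$, and then invoke Noetherianity.

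As a first step, I would consider the homogeneous ideal
\[
L\;:=\;\bigl(0:_{\gr_I(S)}\mm\bigr)\;\subseteq\;\gr_I(S),
\]
whose degree-$n$ component is $L_n=\bigl((I^{n+1}:\mm)\cap I^n\bigr)/I^{n+1}$. One checks directly that $L$ is indeed an ideal: for $x\in(I^{n+1}:\mm)\cap I^n$ and $y\in I^s$, one has $xy\in I^{n+s}$ and $\mm(xy)=y(\mm x)\subseteq yI^{n+1}\subseteq I^{n+s+1}$, so $\overline{xy}\in L_{n+s}$. Since $\gr_I(S)$ is Noetherian, $L$ is finitely generated as a $\gr_I(S)$-module; and because $\mm L=0$ by construction, $L$ is in fact finitely generated over $\gr_I(S)/\mm\gr_I(S)=\mathcal{F}(I)$.

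Second, the inclusion of numerators $(I^m:\mm)\cap I^{m-1}\hookrightarrow(I^m:\mm)$ induces an $\mathcal{F}(I)$-linear injection
\[
\psi\colon L(-1)\;\hookrightarrow\;\Soc(I)
\]
whose cokernel in degree $m$ is $\bigl((I^m:\mm)+I^{m-1}\bigr)/I^{m-1}$. The crux of the argument --- and the main obstacle --- is to show that this cokernel vanishes for $m\gg 0$, that is, $(I^m:\mm)\subseteq I^{m-1}$ eventually. The route I would take is through the Artin--Rees lemma applied to the submodule $\mm\subseteq S$ with respect to the $I$-adic filtration, which yields a constant $c$ with $I^m\cap\mm\subseteq\mm I^{m-c}$ for $m\geq c$. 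Combined with the identity $\mm(I^m:\mm)\subseteq I^m\cap\mm$, this forces the image of $(I^m:\mm)$ in $S/I^{m-1}$ to lie in the $\mm$-socle, which by the first step is controlled by the finitely generated module $L$; a Noetherian/Nakayama-type argument then rules out nontrivial classes of sufficiently low $I$-order for $m$ large.

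Finally, fixing a threshold $m_0$ past which $\psi$ is an isomorphism, $\Soc(I)_{\geq m_0}$ is identified with a truncation of $L(-1)$, and truncations of finitely generated graded modules over the Noetherian ring $\mathcal{F}(I)$ are again finitely generated. Each of the remaining components $(I^m:\mm)/I^m$ with $m<m_0$ is a finitely generated $S$-module killed by $\mm$, hence a finite-dimensional $K$-vector space. Summing these finitely many contributions with the finitely generated truncation yields the desired finite generation of $\Soc(I)$ as an $\mathcal{F}(I)$-module.
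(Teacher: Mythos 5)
Your first two steps reproduce the paper's argument exactly: you form the same module $L=(0:_{\gr_I(S)}\mm)$, observe it is an ideal of the Noetherian ring $\gr_I(S)$, hence finitely generated, and conclude it is finitely generated over $\mathcal{F}(I)=\gr_I(S)/\mm\gr_I(S)$ because $\mm$ kills it. Your final paragraph (truncations plus finitely many finite-dimensional low-degree pieces) also matches. The divergence, and the gap, is entirely in the middle step, which you yourself flag as ``the crux.''

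You need $(I^m:\mm)\subseteq I^{m-1}$ for $m\gg 0$, so that the cokernel of $\psi$ vanishes in high degrees. The paper obtains this by quoting Ratliff's theorem that $(I^m:I)=I^{m-1}$ for $m\geq m_0$, whence $(I^m:\mm)\subseteq(I^m:I)=I^{m-1}$. Your proposed substitute via Artin--Rees does not work. Applying Artin--Rees to the pair $\mm\subseteq S$ with the $I$-adic filtration gives $I^m\cap\mm=I^{m-c}(I^c\cap\mm)$; but since $I\subseteq\mm$ we have $I^c\cap\mm=I^c$ and hence $I^m\cap\mm=I^m$, which is trivial. Your claimed inclusion $I^m\cap\mm\subseteq\mm I^{m-c}$ reduces to $I^m\subseteq\mm I^{m-1}$, which is just $I\subseteq\mm$ and carries no information. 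Likewise, ``$\mm(I^m:\mm)\subseteq I^m\cap\mm$'' is literally the definition of the colon ideal, and the observation that the image of $(I^m:\mm)$ in $S/I^{m-1}$ lands in the $\mm$-socle of $S/I^{m-1}$ is automatic (since $\mm(I^m:\mm)\subseteq I^m\subseteq I^{m-1}$) and does not identify that image with anything controlled by $L$: the degree-$(m-2)$ component of $L$ is $\bigl((I^{m-1}:\mm)\cap I^{m-2}\bigr)/I^{m-1}$, which is only the part of the socle of $S/I^{m-1}$ coming from $I^{m-2}$, whereas an element of $(I^m:\mm)$ need not lie in $I^{m-2}$. The closing appeal to a ``Noetherian/Nakayama-type argument'' is exactly the content that is missing; it is a genuine theorem (Ratliff's), not something that falls out of Artin--Rees together with Noetherianity of $\gr_I(S)$. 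To complete your proof along the lines you set out, you must invoke that result directly, as the paper does.
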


\begin{proof}
We denote by $\gr_I(S)=\Dirsum_{m\geq 0}I^m/I^{m+1}$ the associated graded ring of $I$. Then the module $C(I)=(0:_{\gr_I(S)}\mm)=\{f\in \gr_I(S)\:\; \mm f=0\}$ is a finitely generated $\gr_I(S)$-module because it is an ideal in  the Noetherian ring $\gr_I(S)$.  Since $(0:_{\gr_I(S)}\mm)$ is annihilated by $\mm$ and since $\mathcal{F}(I) =\gr_I(S)/\mm \gr_I(S)$ it is actually a finitely generated $\mathcal{F}(I)$-module.

The $m$th graded component of $C(I)$ is equal to $((I^m:\mm)\sect I^{m-1})/I^m$. Ratliff \cite{R} showed that there exists $m_0$ such that  $(I^{m}:I)=I^{m-1}$ for all $m\geq m_0$. Since  $(I^{m} :\mm)\subseteq (I^{m} :I)$,  we see that $\Soc(I)_{\geq m_0} =C(I)_{\geq m_0}$, where for any graded module $M$,  $M_{\geq r}$ denotes the submodule  $\Dirsum_{m\geq r}M_m$ of $M$.  This shows that $\Soc(I)_{\geq m_0}$ is finitely generated, and consequently $\Soc(I)$ is finitely generated as well.
\end{proof}

We say that $I$ satisfies the {\em  Ratliff condition} if $(I^{m+1}:I)=I^m$  for all $m\geq 0$. Let $C(I)=(0:_{\gr_I(S)}\mm)$ be the module introduced in the proof of Proposition~\ref{finite}.  Note that $C(I)=\Soc(I)$, if $I$ satisfies the Ratliff condition.

\medskip
Let $I_1\subseteq S_1=K[x_1,\ldots,x_n]$ and $I_2\subseteq S_2=K[y_1,\ldots, y_m]$ be monomial ideals, and let $I=(I_1, I_2)S\subseteq S$, where $S=S_1\tensor _KS_2$. By \cite[Proposition 3.2]{H-T-T}, the canonical map $\alpha\:  \gr_{I_1}(S_1)\tensor_K \gr_{I_2}(S_2)\to \gr_{I}(S)$ with $f\tensor g\mapsto fg$ is an isomorphism. Here,  since $\gr_{I_1}(S_1)\to \gr_{I}(S)$ is injective, we identify $f\in \gr_{I_1}(S_1)$  with its image in $\gr_{I}(S)$. A similar identification is made for $g\in \gr_{I_2}(S_2)$.

For an ideal $J\subseteq \gr_{I_i}(S_i)$ we denote its extension ideal  under the canonical (injective) map $\gr_{I_i}(S_i)\to \gr_I(S)$ again by $J$. With this convention we have

\begin{Theorem}
\label{sum}
Let $I_1\subseteq S_1=K[x_1,\ldots,x_m]$ and $I_2\subseteq S_2=K[y_1,\ldots, y_n]$ be monomial ideals, and let $I=(I_1,I_2)S\subseteq S$, where $S=S_1\tensor _KS_2$. Assume that $I_1$, $I_2$ and $I$ satisfy the Ratliff condition. Then $\Soc(I)=\Soc(I_1)\Soc(I_2)$.
\end{Theorem}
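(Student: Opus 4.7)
The plan is to pass through the ring isomorphism $\alpha\colon A\tensor_K B\to\gr_I(S)$ of \cite[Proposition~3.2]{H-T-T}, with $A=\gr_{I_1}(S_1)$ and $B=\gr_{I_2}(S_2)$, identifying $A,B$ with their images in $\gr_I(S)$. Under the Ratliff hypothesis the identifications $\Soc(I_i)=C(I_i)$ and $\Soc(I)=C(I)$ noted just after Proposition~\ref{finite} reduce the theorem to the ideal equality $C(I)=C(I_1)C(I_2)$ inside $A\tensor_K B$. The inclusion $C(I_1)C(I_2)\subseteq C(I)$ is immediate: for $a\in C(I_1)$ and $b\in C(I_2)$ the product equals $a\tensor b$; each $x_k\in\mm_1$ acts only on the first factor, giving $x_k(a\tensor b)=(x_k a)\tensor b=0$, and symmetrically $y_l(a\tensor b)=0$, so $\mm(a\tensor b)=0$.

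For the reverse inclusion I would exploit the bigrading $(A\tensor_K B)_{i,j}=A_i\tensor_K B_j$. The key observation is that every generator of $\mm$ acts on $A\tensor_K B$ with a well-defined bidegree shift: $x_k$ shifts by $(0,0)$ if $x_k\notin I_1$ and by $(1,0)$ if $x_k\in I_1$, and never touches the second factor, with an analogous statement for $y_l$. Hence for $f=\sum f_{ij}$ with $f_{ij}\in A_i\tensor_K B_j$, the images $x_k f_{ij}$ (for fixed $k$) lie in pairwise distinct bidegrees, so $\mm f=0$ forces $\mm f_{ij}=0$ for every $(i,j)$; in other words $C(I)$ is bihomogeneous. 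Given a bihomogeneous $f\in A_i\tensor_K B_j$ with $\mm f=0$, expand $f=\sum_\beta e_\beta\tensor b_\beta$ over a $K$-basis $\{b_\beta\}$ of $B_j$; then $x_k f=0$ together with the linear independence of the $b_\beta$ yields $x_k e_\beta=0$ for all $k,\beta$, so $e_\beta\in C(I_1)\cap A_i$ and $f\in(C(I_1)\cap A_i)\tensor_K B_j$. Symmetrically $f\in A_i\tensor_K(C(I_2)\cap B_j)$, and the standard tensor intersection formula $(V\tensor_K B_j)\cap(A_i\tensor_K W)=V\tensor_K W$ for $K$-subspaces $V\subseteq A_i$, $W\subseteq B_j$, places $f$ inside $(C(I_1)\cap A_i)\tensor_K(C(I_2)\cap B_j)\subseteq C(I_1)C(I_2)$, because the ideal product $(C(I_1)\tensor_K B)(A\tensor_K C(I_2))$ collapses to $C(I_1)\tensor_K C(I_2)$. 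Summing over $(i,j)$ gives $C(I)\subseteq C(I_1)C(I_2)$.

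I expect the main obstacle to be the bookkeeping in the bihomogeneity step: the generators of $\mm$ play asymmetric roles in $\gr_I(S)$ depending on whether they lie in $I_1$, in $I_2$, or in neither, so one must verify that the induced bidegree shifts are always well-defined and uniform enough to separate the bihomogeneous components of an element of $C(I)$. Once bihomogeneity is secured, the remainder reduces to routine linear algebra on tensor products of $K$-vector subspaces.
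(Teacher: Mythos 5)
Your proof is correct, and it follows a genuinely different route from the paper's, though both pass through the same isomorphism $\alpha\colon\gr_{I_1}(S_1)\tensor_K\gr_{I_2}(S_2)\to\gr_I(S)$ from \cite[Proposition~3.2]{H-T-T}. The paper exploits monomiality: it observes that $\Soc(I)$ is generated by \emph{monomials} $u+I^{k+1}$ and uses the fine $\ZZ^{m+n}$-multigrading of $\alpha$ to write such a generator directly as $\alpha\bigl((u_1+I_1^{r+1})\tensor(u_2+I_2^{s+1})\bigr)$ for monomials $u_i$; since a pure tensor vanishes only if one factor does, $x_i u_1\in I_1^{r+1}$ falls out at once, and the argument ends quickly. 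You instead work with the coarser $(\ZZ\times\ZZ)$-bigrading inherited from the tensor factors, establish that the annihilator $C(I)$ is bihomogeneous, then expand a bihomogeneous $f\in A_i\tensor_K B_j$ in a basis of $B_j$ and use linear independence plus the standard intersection formula $(V\tensor_K B_j)\cap(A_i\tensor_K W)=V\tensor_K W$. This avoids any reference to monomial generators, so in principle it is somewhat more robust (modulo the fact that the cited isomorphism $\alpha$ is itself proved for monomial ideals), at the cost of being a bit longer. One minor inaccuracy: you assert that $x_k$ shifts bidegree by $(1,0)$ when $x_k\in I_1$, but $\mm$ acts on $\gr_I(S)$ through the degree-$0$ component $S/I$, so \emph{every} generator of $\mm$ preserves bidegree; when $x_k\in I_1$ its image in $A_0=S_1/I_1$ is zero, so the action is just the zero map in bidegree $(0,0)$. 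This slip is harmless for your argument, which only needs the shift to be well-defined and constant for fixed $k$, but the correct statement makes the bihomogeneity step even more transparent.
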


\begin{proof}
Let $A_i=\gr_{i_1}(S_i)$ for $i=1,2$. Our assumptions imply that $\Soc(I_1)$ is an ideal in $A_1$, $\Soc(I_2)$ an ideal in $A_2$ and $\Soc(I)$ an ideal in $A=\gr_I(S)$. Then $\Soc(I_1)\tensor_K\Soc(I_2)$ is an ideal in $A_1\tensor_K A_2$.  We show that $\alpha(\Soc(I_1)\tensor_K\Soc(I_2))=\Soc(I)$.

The inclusion $\alpha(\Soc(I_1)\tensor_K\Soc(I_2))\subseteq\Soc(I)$ is obvious.
For the converse inclusion, we first notice that $\Soc(I)$ is generated by monomials. Indeed, the elements  of degree $k$ in  $\Soc(I)$ which are not zero are of the  form $u+ I^{k+1}\in I^k/I^{k+1}$ with $u\in (I^{k+1}:\mm)\setminus  I^{k+1}$, where $\mm =(x_1,\ldots,x_m,y_1,\ldots,y_m)$. Since $I$ is a monomial ideal, $I^{k+1}:\mm$ is a monomial ideal as well, and the assertion follows.

Let $u+I^{k+1}\in \Soc(I)_{k}$ with $u$ a monomial. Since $\alpha$ is a multigraded isomorphism there exists monomials $u_i\in S_i$ and  non-negative positive integers $r$ and $s$ with $r+s=k$ such that $\alpha((u_1+I_1^{r+1})\tensor (u_2+I_2^{s+1}))=u+I^{k+1}$. Since $x_i(u+I^k)=0$ and since $\alpha$ is an isomorphism, it follows that
\[
(x_iu_1+I_1^{r+1})\tensor (u_2+I_2^{s+1})=(x_i\tensor 1)((u_1+I_1^{r+1})\tensor (u_2+I_2^{s+1}))=0.
\]
Note that $u_2+I_2^{s+1}\neq 0$, because otherwise $(u_1+I_1^{r+1})\tensor (u_2+I_2^{s+1})=0$, and hence $u+ I^{k+1}=0$. On the other hand, since
$(x_iu_1+I^{r+1})\tensor (u_2+I^{s+1})=0$,  it follows  that $(x_iu_1+I_1^{r+1})=0$. This is the case for all $i$, which implies that $u_1+I_1^{r+1}\in \Soc(I_1)$. Similarly we see that $u_2+I_2^{s+1}\in \Soc(I_2)$.
\end{proof}

Let $I\subseteq S$ be a monomial ideal generated in degree $d$, and $u\in S$ a monomial  such that $u+I^m\in \Soc(I)$ and $u+I^m\neq 0$. What can we say about the degree $\deg(u)$ of $u$? Obviously, $\deg(u)\geq md-1$. If $\deg(u)=md-1$ for all such elements $u$, then the socle module $\Soc(I)$ has the following good property.

\begin{Proposition} \label{rank} Let $I\subseteq S$ be a monomial ideal generated  in degree $d$. Suppose that for any positive integer $m$ and for any monomial $u\in (I^m:\mm)\setminus I^m$, one has $\deg(u)=md-1$. Then $\Soc (I)$ is identified with an ideal of $\mathcal{F}(I)$. In particular, $\rank \Soc(I)=1$ if\quad $\Soc(I)\neq 0$.
\end{Proposition}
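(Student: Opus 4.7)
The plan is to produce an explicit injective graded $\mathcal{F}(I)$-module homomorphism
\[
\phi \colon \Soc(I) \To \mathcal{F}(I), \qquad f + I^m \longmapsto x_1 f + \mm I^m,
\]
and to observe that the image is automatically an ideal of $\mathcal{F}(I)$, which gives the desired identification. The choice of the variable $x_1$ is arbitrary; any fixed variable will do. The first thing to verify is that $\phi$ is well-defined and $\mathcal{F}(I)$-linear, and this is routine: $f \in (I^m:\mm)$ forces $x_1 f \in I^m$, so the image lies in $\mathcal{F}(I)_m = I^m/\mm I^m$; the equivalence $f \equiv f' \pmod{I^m}$ yields $x_1 f - x_1 f' \in x_1 I^m \subseteq \mm I^m$; and linearity reduces to the identity $x_1(gf) = g(x_1 f)$ for $g \in I^r$ representing a class in $\mathcal{F}(I)_r$.

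The main obstacle is injectivity, and this is the unique place where the degree hypothesis is used. Since $I$ is monomial, both $(I^m:\mm)$ and $I^m$ are monomial ideals, and $\Soc(I)_m$ admits a $K$-basis consisting of the classes $u + I^m$ with $u$ a monomial in $(I^m:\mm) \setminus I^m$. By hypothesis each such $u$ has degree exactly $md - 1$, so $x_1 u$ is a monomial of degree $md$ belonging to $I^m$; since $I^m$ is generated in degree $md$, every such monomial is automatically a minimal generator and therefore lies in $G(I^m)$. Because $G(I^m)$ is a $K$-basis of $\mathcal{F}(I)_m$ and the monomial map $u \mapsto x_1 u$ is injective, $\phi$ sends the chosen basis of $\Soc(I)_m$ injectively to a subset of the basis $G(I^m)$ of $\mathcal{F}(I)_m$. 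Hence $\phi$ is injective on each component, and its image $\phi(\Soc(I))$ is an $\mathcal{F}(I)$-submodule of $\mathcal{F}(I)$, i.e., an ideal.

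For the final rank assertion, I would appeal to the standard fact that when $I = (u_1, \ldots, u_r)$ is a monomial ideal generated in a single degree, the fiber cone $\mathcal{F}(I)$ is isomorphic to the toric subring $K[u_1 t, \ldots, u_r t]$ of $K[x_1, \ldots, x_n, t]$, where $v + \mm I^m$ with $v \in G(I^m)$ is sent to $v t^m$; in particular $\mathcal{F}(I)$ is an integral domain. Any nonzero ideal of a domain has rank one, so from $\Soc(I) \neq 0$ one concludes $\rank \Soc(I) = 1$. The degree hypothesis is precisely what makes the whole argument fit together: without it, the image $x_1 u$ of a nonzero monomial class could have degree strictly larger than $md$ and get absorbed into $\mm I^m$, breaking the injectivity step.
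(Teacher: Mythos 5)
Your proof is correct and follows essentially the same path as the paper: fix a variable, map $\Soc(I)$ into $\mathcal{F}(I)$ by multiplication with it, use the degree hypothesis to check the image of a monomial basis lands in (a subset of) $G(I^m)$ and hence injectivity holds, and conclude the rank claim from $\mathcal{F}(I)$ being a domain. Your injectivity argument via minimal generators of $I^m$ is a slightly more explicit rendering of the paper's terse remark that $xu\notin\mm I^m$, but the substance is identical.
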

\begin{proof} Fix a variable $x\in S$. We define $\varphi$ to be the map $\varphi: \Soc(I)\longrightarrow \mathcal{F}(I)$ with the property   that $\varphi(u+I^m)=xu+\mm I^m$ for any $u+I^m\in (I^m:\mm)/ I^m$ and that $\varphi(u_1+u_2)=\varphi(u_1)+\varphi(u_2)$ for any $u_1,u_2\in \Soc(I).$

It is clear that $\varphi$ is well-defined. Note that if a monomial $u\in (I^m:\mm)\setminus  I^m$ then $\deg (xu)=md$ and $xu\notin \mm I^m$, we have $\varphi$ is injective. It remains to show that $\varphi$ is a $\mathcal{F}(I)$-module homomorphism. Let $u+I^m\in (I^m:\mm)/ I^m$ and $v+\mm I^r\in I^r/\mm I^r$. Then
\[\varphi((v+\mm I^r)(u+I^m))=\varphi(vu+I^{m+r})=xvu+\mm I^{m+r}=(v+\mm I^r)\varphi(u+I^m).
\]
By this equality and since $\varphi$ preserves the addition, it follows that  $\varphi$ is indeed a $\mathcal{F}(I)$-module homomorphism, as required.

Finally, we note that  $\mathcal{F}(I)$ is an integral domain (in fact, it is a subalgebra of the polynomial ring $S$) since $I$ is generated in a single degree. Now, the last result follows since every nonzero ideal of an integral domain has rank 1.
\end{proof}

 Unfortunately, the inequality $\deg(u)\geq md-1$, where $u$ is a homogeneous  element in $(I^m:\mm)\setminus I^m$, can be strict. Indeed, consider the  edge ideal $I(G)$  of the graph $G$ consisting of two disjoint $3$-cycles which are connected by a path of length $2$, see $G_1$ in Figure~\ref{G}.  Then
\[
I(G)=(x_1x_2,x_1x_3,x_2x_3,y_1y_2,y_2y_3,y_2y_3,x_1z,y_1z).
\]
It can be easily seen that $u\in (I(G)^3:\mm)\setminus I(G)^3$,  where  $u=x_1x_2x_3y_1y_2y_3$. Here,  $\deg(u)>3\cdot 2-1$.

\medskip
On the other hand, we have

\begin{Proposition}
\label{d-1}
Suppose that the graded ideal $I$ in the polynomial ring $S$ has $d$-linear resolution. Then each homogeneous element in $(I:\mm) \setminus I$ is of degree $d-1$
\end{Proposition}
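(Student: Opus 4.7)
The plan is to reduce the claim to showing that the socle of $S/I$, namely $\Soc(S/I) = (I:\mm)/I$, is concentrated entirely in degree $d-1$, and then to argue both a lower and an upper bound for the degree of a homogeneous socle element.

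For the lower bound, I would observe that since $I$ has a $d$-linear resolution, $I$ is in particular generated in degree $d$, so $I_k = 0$ for all $k < d$. If $u \in (I:\mm)$ is homogeneous of degree $e < d-1$, then for each variable $x_i$, the element $x_i u$ lies in $I_{e+1}$, but $e+1 < d$, hence $x_i u = 0$ in the polynomial ring $S$. Since $S$ is a domain, this forces $u = 0$, contradicting $u \notin I$. So any homogeneous $u \in (I:\mm)\setminus I$ has degree $\geq d-1$.

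For the upper bound, the linear resolution assumption enters through the Koszul-$\Tor$ identification. The Koszul complex on $x_1,\ldots,x_n$ is a minimal graded free resolution of $K$ over $S$, and its last differential has kernel precisely the socle; computing $\Tor^S_n(K, S/I)$ from this resolution one obtains the natural isomorphism $\Tor^S_n(K,S/I)_j \cong \Soc(S/I)_{j-n}$ for every $j$. Since $I$ has a $d$-linear resolution, the minimal graded free resolution of $S/I$ has Betti numbers $\beta_{i,j}(S/I) = 0$ unless $(i,j) = (0,0)$ or $j = i + d - 1$. In particular $\beta_{n,j}(S/I) \neq 0$ forces $j = n + d - 1$, so $\Soc(S/I)_k = 0$ whenever $k \neq d-1$. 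Therefore, if $u$ is a homogeneous element of $(I:\mm) \setminus I$ of degree $e$, then $u + I$ is a nonzero element of $\Soc(S/I)_e$, which forces $e = d-1$.

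I do not anticipate a serious obstacle: the central ingredient is the standard Koszul-$\Tor$ formula for the top $\Tor$, and the conclusion follows immediately from the shape of a linear resolution. An alternative phrasing would invoke $\reg(S/I) = d-1$ together with the fact that the socle degrees of $S/I$ appear among the shifts of the last module in the minimal free resolution, but the Koszul computation seems the most direct.
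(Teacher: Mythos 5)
Your proof is correct and follows essentially the same route as the paper's: both identify $\Tor_n^S(K,S/I)$ with the socle $(I:\mm)/I$ (shifted by $n$) via Koszul homology, and then read off the socle degree from the fact that a $d$-linear resolution forces $\beta_{n,j}(S/I)=0$ unless $j=n+d-1$. Your separate lower-bound argument is harmless but redundant, since the Tor computation already shows $\Soc(S/I)$ is concentrated in degree $d-1$.
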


\begin{proof}
Let
\[
\FF: 0\to F_{n-1}\to \cdots \to F_1\to F_0\to I\to 0.
\]
be the graded free $S$-resolution of $I$. Since $I$ has $d$-linear resolution, it follows that $F_{n-1}=S(-d-(n-1))^{\beta_{n-1}}$.

We have  the following isomorphisms of graded modules
\[
K(-d-(n-1))^{\beta_{n-1}}\iso \Tor_{n-1}(K, I)\iso \Tor_n(K, S/I)\iso H_n(x_1,\ldots,x_n;S/I).
\]
Here $H_n(x_1,\ldots,x_n;S/I)$ denotes the $n$th Koszul homology of $S/I$ with respect to the sequence $x_1,\ldots,x_n$.
Note that $H_n(x_1,\ldots,x_n;S/I)=((I:\mm) /I)\bigwedge^n E$, where $E=\Dirsum_{i=1}^n(S/I)e_i$.  Hence $((I:\mm)/I)\bigwedge^n E=((I:\mm)/I)(e_1\wedge e_2\wedge \cdots \wedge e_n)=K(-d-(n-1))^{\beta_{n-1}}$.
Comparing degrees we obtain the desired conclusion, since $\deg(e_1\wedge e_2\wedge \cdots \wedge e_n)=n$.
\end{proof}

Proposition~\ref{d-1} implies that if $I$ is a graded ideal generated in degree $d$ which has the property that all its powers have linear resolution, then $\Soc(I)$ is generated by elements of the form $f+I^k$ where $f$ is homogeneous of  degree $kd-1$. Classes of such ideals are the polymatroidal ideals which we consider in the last section of this paper. In Proposition~\ref{two}, we provide a large  class of   edge ideals $I$ which may have not linear resolutions, but the equality $\deg(u)= md-1=2m-1$ still holds for any monomial $u\in (I^m:\mm)\setminus I^m$.
\medskip

\medskip
There is a natural condition on monomial ideals $I$ generated in degree $d$ which guarantees the existence of at least one generator $u+I^k$ of $\Soc(I)$ with $\deg u=kd-1$.

Let $G(I)=\{u_1,\ldots,u_m\}$. The {\em linear relation graph} $\Gamma$ of $I$ is the graph with edge set
\[
E(\Gamma)=\{\{i,j\}\: \text{there exist $u_k,u_l\in G(I)$ such that $x_iu_k=x_ju_l$}\}
\]
and vertex set $V(\Gamma)=\Union_{\{i,j\}\in E(\Gamma)}\{i,j\}$.

In the special case that $I=I(G)$ is the edge  ideal of a simple graph $G$ on the vertex set $[n]$,
the linear relation graph $\Gamma$ of $I(G)$ has edge set
\begin{eqnarray*}
\label{easy}
\{ \{i,j\} \; : \text{$i, j \in V(G)$ and $i$ and $j$ have a common neighbor in $G$}\}.
\end{eqnarray*}

The next result  follows from \cite[Corollary~3.4]{H-Q}.

\begin{Proposition}
\label{relation}
Let $I\subseteq S$ be a monomial ideal generated in degree $d$ with linear relation graph $\Gamma$.  Suppose that $\Gamma$ has $n$ vertices and that $\Gamma$ is connected. Then there exists a monomial $u\in (I^n:\mm)\setminus I^n$ of degree $dn-1$. In particular, $\Soc(I)$ admits a generator $u+I^k$  with $k\leq n$ and $\deg u=kd-1$.
\end{Proposition}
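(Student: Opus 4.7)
The first assertion is a direct application of \cite[Corollary~3.4]{H-Q}: the hypothesis that $\Gamma$ is connected with $n$ vertices is precisely what that corollary requires in order to produce a monomial $u\in (I^n:\mm)\setminus I^n$ of degree $dn-1$. Note that $u\notin I^n$ is automatic from the degree constraint, since $I^n$ is generated in degree $dn$; thus the substance of the cited result lies in constructing a monomial of degree $dn-1$ which is annihilated by every variable modulo $I^n$.

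For the ``in particular'' assertion, the plan is to extract a suitable generator of $\Soc(I)$ from the nonzero class $u + I^n\in\Soc(I)_n$. By Proposition~\ref{finite}, $\Soc(I)$ is finitely generated as an $\mathcal{F}(I)$-module; because $I$ is a monomial ideal, $\Soc(I)$ inherits a multigrading and its minimal homogeneous generators may be chosen in the form $\{v_j+I^{k_j}\}_j$ with each $v_j$ a monomial. Expand
\[
u + I^n \;=\; \sum_j f_j\,(v_j+I^{k_j}), \qquad f_j\in \mathcal{F}(I)_{n-k_j},
\]
and, again by multigrading, represent each $f_j$ by a monomial $w_j\in I^{n-k_j}$ of degree $(n-k_j)d$. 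Lifting to $S$ yields the congruence $u \equiv \sum_j w_j v_j \pmod{I^n}$, where each $\deg v_j \geq k_j d-1$ by the general lower bound on socle degrees recorded just before Proposition~\ref{rank}.

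The decisive step is the comparison of degree-$(dn-1)$ homogeneous components. On the right, $I^n$ contributes nothing in degree $dn-1$ since its generators have degree $dn$; each $w_j v_j$ is a monomial of degree $(n-k_j)d+\deg v_j\geq dn-1$, and therefore contributes to degree $dn-1$ precisely when the equality $\deg v_j=k_j d-1$ holds. On the left, we have the single monomial $u$, so some index $j$ must satisfy $w_j v_j=u$ together with $\deg v_j=k_j d-1$; evidently $k_j\le n$, and $v_j+I^{k_j}$ is the desired generator. The main subtlety lies in this degree-comparison step: securing monomial representatives for both the generators $v_j+I^{k_j}$ and the coefficients $f_j$ --- which the multigrading of the monomial ideal $I$ provides --- is essential, since otherwise cancellation in degree $dn-1$ on the right could in principle occur without any summand being equal to $u$.
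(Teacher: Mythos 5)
The paper offers no proof of this proposition at all --- it asserts only that the result ``follows from \cite[Corollary~3.4]{H-Q}'' --- so there is no argument in the text to compare yours against. Your treatment of the first assertion matches the paper's citation. For the ``in particular'' clause, which the paper leaves entirely implicit, your argument supplies a correct derivation, and the key point you identify is the right one: because $I$ is a monomial ideal, $\Soc(I)$ and $\mathcal{F}(I)$ carry a $\ZZ^n$-grading, which lets you choose monomial representatives $v_j$ for a minimal generating set of $\Soc(I)$ and monomial coefficients $w_j$ (after restricting to the multidegree of $u$); since $I^n$ has no nonzero elements in degree $dn-1$ and each $w_jv_j$ has degree $(n-k_j)d+\deg v_j\geq dn-1$ with equality iff $\deg v_j=k_jd-1$, comparing degree-$(dn-1)$ terms produces an index $j$ with $w_jv_j=u$, hence $\deg v_j=k_jd-1$ and $k_j\le n$ (the latter because $\mathcal{F}(I)_{n-k_j}\neq 0$). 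One small point of hygiene: when you lift $u+I^n=\sum_j f_j(v_j+I^{k_j})$ to $S$ you should keep the scalar coefficients from $K$, writing $u\equiv\sum_j c_j\,w_jv_j\pmod{I^n}$; this does not change the conclusion, since after restricting to a single multidegree all the surviving $w_jv_j$ coincide with $u$ and you only need one $c_j\neq 0$, but omitting the $c_j$ makes the phrase ``could in principle occur without any summand being equal to $u$'' read as a worry that your own multigraded reduction has already eliminated.
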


\section{Edge Ideals}

In this section, we always assume that  $G$ is  a simple graph on vertex set $V(G)=\{x_1, \ldots, x_n\}$ unless otherwise stated. The set of edges of $G$ is denoted by $E(G)$.  Let $S=K[x_1,\ldots,x_n]$ be  the polynomial ring
 over a field $K$. The {\it edge ideal} of $G$,
denoted by $I(G)$, is the ideal of $S$ generated by all
square-free monomials $x_ix_j$ such that $\{x_i,x_j\}\in E(G)$. We
often write $x_ix_j\in E(G)$ instead of $\{x_i,x_j\}\in E(G)$. By
abusing notation, we use  $x_ix_j$ to refer to both the edge
$x_ix_j\in E(G)$ and the monomial $x_ix_j\in I(G)$. We will study the socle module of an edge ideal of $G$. In view of Theorem~\ref{sum}, we  always assume that $G$ is a  connected graph.

Recall some necessary graphic notions.  A {\it walk} in $G$ is an alternating sequence of vertices and edges
$x_1, e_1, x_2, e_2,\ldots, e_{t-1}, x_t$ in which $e_i=x_i
x_{i+1}$. The number $t-1$ is the {\it length} of this walk. We often write this walk simply  as:  $x_1-x_2-\cdots-x_{t}.$  If $x_1=x_t$, it is called a {\it closed walk}. A {\it path}  is a simple walk, that is, a walk in which
each vertex appears exactly once. A {\it cycle}  is a closed walk in which each vertex appears exactly once except  the first and
the last vertices. A cycle of odd length is called an {\it odd cycle}.


We use $\{\{\ldots\}\}$ to refer to a {\it multiset}. For examples, $\{\{1,2\}\}\neq \{\{1,2,2\}\}$, $\{\{1,2,2\}\}\subseteq \{\{1,2,2,3,4\}\}$, but $\{\{1,2,2\}\}\nsubseteq \{\{1,2,3,4\}\}$, and $\{\{1,2,2,3,3\}\}\cap \{\{2,2,2, 3\}\}=\{\{2,2,3\}\}$.
The next  lemma, which is a variation of  \cite[Theorem~6.1 and
Theorem~6.7]{B}, plays a key role in this section.

\begin{Lemma}
\label{connectedness}   Let $G$ be a simple graph with edge ideal
$I=I(G)$. Let $m\geq 1$ be an integer and  let  $e_1, e_2, \ldots, e_m$ (maybe repeatedly) be edges  of $G$  and $\mathbf{v}\in S$ a monomial  such that
$e_1e_2\cdots e_m\mathbf{ v} \in I^{m+1}$.  Then there exist variables  $w$ and $y$
with $wy|\mathbf{v}$  and an odd walk in $G$ connecting $w$ to $y$:
$$ w=z_1-z_2-z_3-\cdots-z_{2t}-z_{2t+1}-z_{2t+2}=y$$
 such that $\{\{z_2z_3,\ldots,z_{2t}z_{2t+1}\}\}\subseteq  \{\{e_1,\ldots,e_m\}\}$. Here, if $t=0$, then the walk means the edge $w-y$ (i.e., the edge $wy$).
\end{Lemma}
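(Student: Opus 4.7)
The plan is to recast the divisibility condition as a colon-ideal membership and then invoke Banerjee's structural description of such colons. Indeed, $e_1 e_2 \cdots e_m \mathbf{v} \in I^{m+1}$ is equivalent to $\mathbf{v} \in (I^{m+1} : e_1 \cdots e_m)$. Since both $I^{m+1}$ and $e_1 \cdots e_m$ are monomial, this colon is a monomial ideal of $S$, so the monomial $\mathbf{v}$ is divisible by some minimal monomial generator of it. The entire lemma therefore reduces to a statement about the shape of those minimal generators.

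For this shape, I would quote \cite[Theorem~6.1]{B} together with \cite[Theorem~6.7]{B}: for any product $e_1 \cdots e_s$ of edges of $G$, the ideal $(I^{s+1} : e_1 \cdots e_s)$ is minimally generated by quadratic monomials $wy$, with $w,y$ variables (possibly with $w=y$), of one of two types. Either $wy$ is an edge of $G$, or else $w$ and $y$ are \emph{even-connected} with respect to $e_1, \ldots, e_s$, in the sense that there exists an odd walk
\[
w = z_1 - z_2 - \cdots - z_{2t+1} - z_{2t+2} = y
\]
in $G$ such that the "even" edges $z_{2}z_{3}, z_{4}z_{5}, \ldots, z_{2t}z_{2t+1}$ form a sub-multiset of $\{\{e_1, \ldots, e_s\}\}$. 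I would then pick a minimal generator $wy$ of $(I^{m+1} : e_1 \cdots e_m)$ with $wy \mid \mathbf{v}$: the edge case yields the $t = 0$ alternative of the lemma (the walk consisting of the single edge $wy$), and the even-connection case yields the $t \geq 1$ alternative with the required multiset condition, both delivered directly by Banerjee's description.

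The main obstacle, and the only real content beyond citing \cite{B}, is a bookkeeping task: one must verify that Banerjee's inductive formulation of even-connectedness (built step-by-step via consecutive edges sharing a variable) matches exactly the multiset inclusion $\{\{z_2z_3, \ldots, z_{2t}z_{2t+1}\}\} \subseteq \{\{e_1, \ldots, e_m\}\}$ stated here, with the correct handling of edges that occur with multiplicity among the $e_i$. Once this dictionary between Banerjee's language and the walk/multiset formulation is laid out, the lemma is essentially a repackaging of Theorems~6.1 and~6.7 of \cite{B}.
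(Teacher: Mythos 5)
Your proposal is correct and matches the paper's intent: the paper does not prove Lemma~\ref{connectedness} but labels it a variation of Banerjee's Theorems~6.1 and 6.7, and your translation---pass to $\mathbf{v}\in(I^{m+1}:e_1\cdots e_m)$, use that this monomial colon is generated in degree two by edges and even-connected pairs $wy$, and unpack even-connectedness as the odd walk with the multiset condition---is exactly how that citation is meant to be read. The only small point worth making explicit (you implicitly use it) is that $e_1\cdots e_m$ is automatically a minimal generator of $I^m$, since it has degree $2m$ and $I^m$ is generated in degree $2m$, so Banerjee's hypothesis is satisfied for free.
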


Let $U, W$ be two subsets of $V(G)$. Then the distance between $U$ and $W$, denoted by $\dist(U,W)$, is defined as
$$\dist(U,W)=\min\{\dist(x,y) \:\; x\in U,  y\in W \},$$  where $\dist(x,y)$
denotes the minimal of lengths  of the paths between $x$ and $y$.
By convention,  $\dist(x,y)=0$ if $x=y$ and $\dist(x,y)=\infty$ if there is no path between $x$
and $y$. We observe the following easy fact: every odd closed walk (i.e., closed walk of odd length) contains an odd cycle. More precisely,
if $W$ is a closed odd walk of $G$, then $W$ has a subgraph $C$ such that $C$ is an odd cycle. Here, $C$ is a subgraph of $W$ means $V(C)\subseteq V(W)$ and $E(C)\subseteq E(W).$

\begin{Proposition}
 \label{two} Let $G$ be a simple connected graph such that $\dist(V(C_1),V( C_2))\leq 1$  for any two odd cycles $C_1,C_2$ of $G$. Then for any positive integer $m$ and for any  monomial  $\mathbf{u}$ with $\mathbf{u}\in (I^m:\mathfrak{m})\setminus I^m$, one has $\deg (\mathbf{u})=2m-1$ and $\mathbf{u}$ can be written as $\mathbf{u}=x_1x_2\cdots x_{2k+1}e_1\cdots e_{m-k-1}$, where $e_i$ is an edge for each $i$ and $x_1-x_2-\cdots  -x_{2k+1}-x_1$ is an odd cycle.
\end{Proposition}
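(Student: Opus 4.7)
Since $I=I(G)$ satisfies the Ratliff condition, $\mathbf{u}\in I^{m-1}$, so we write $\mathbf{u}=g_1g_2\cdots g_{m-1}\mathbf{w}$ with each $g_i$ an edge of $G$ and $\mathbf{w}$ a monomial, chosen so that no edge of $G$ divides $\mathbf{w}$ (otherwise such an edge could be absorbed into the $g_i$'s, placing $\mathbf{u}$ in $I^m$). In particular $\supp(\mathbf{w})$ is an independent set of $G$, and since $x\mathbf{u}\in I^m$ forces $\deg\mathbf{u}\geq 2m-1$, we have $\deg\mathbf{w}\geq 1$. The engine of the proof is a reshuffling principle from Lemma~\ref{connectedness}: for any variable $x$, applying the lemma to $g_1\cdots g_{m-1}(x\mathbf{w})\in I^m$ produces variables $p,q$ with $pq\mid x\mathbf{w}$ and an odd walk $p=z_1-z_2-\cdots-z_{2t+2}=q$ whose $t$ middle edges $z_{2i}z_{2i+1}$ form a submultiset of $\{\{g_1,\ldots,g_{m-1}\}\}$. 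A direct computation shows that the product of the $t+1$ outer edges equals $pq$ times the product of the $t$ middle edges, so whenever $pq\mid\mathbf{w}$ one can substitute outer edges for middle edges in the factorisation of $\mathbf{u}$ and express $\mathbf{u}$ as a product of $m$ edges, contradicting $\mathbf{u}\notin I^m$. Hence $x\in\{p,q\}$.

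Specialising to $x\in\supp(\mathbf{w})$ and taking $p=x$, we have $q\mid\mathbf{w}$; if $q\ne x$, then $xq=pq\mid\mathbf{w}$ and reshuffling gives a contradiction. Thus $q=x$, so the walk is a closed odd walk $W_x$ at $x$ in $G$ with middle edges in $\{\{g_1,\ldots,g_{m-1}\}\}$. The same reshuffling with $pq=x^2$ rules out $x^2\mid\mathbf{w}$, so $\mathbf{w}$ is squarefree; moreover every vertex of $W_x$ is either $x$ or an endpoint of some middle edge (hence of some $g_i$), so $V(W_x)\subseteq\supp(\mathbf{u})$, and the Eulerian decomposition of $W_x$ in the multigraph sense isolates an odd cycle $C_x$ of $G$ with $V(C_x)\subseteq\supp(\mathbf{u})$.

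The main obstacle is to show $|\supp(\mathbf{w})|=1$. Suppose otherwise and pick distinct $x,y\in\supp(\mathbf{w})$ with odd cycles $C_x,C_y$. The hypothesis $\dist(V(C_x),V(C_y))\leq 1$ supplies either a shared vertex or a bridging edge, and one splices portions of $W_x$, a short transit through $V(C_x)\cup V(C_y)$ (via the shared vertex when $\dist=0$, or the bridging edge together with a partial turn around one of the cycles when $\dist=1$, with an additional full turn around an odd cycle if a parity adjustment is needed), and portions of $W_y$, to produce an odd walk from $x$ to $y$ in $G$. The delicate part is the multiplicity bookkeeping: one must arrange the combined walk's middle-edge multiset to embed into $\{\{g_1,\ldots,g_{m-1}\}\}$, and this is exactly where the distance-one assumption matters, as it keeps the splice short enough that no $g_i$ is reused beyond its available multiplicity. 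Once such a walk is produced, $pq=xy\mid\mathbf{w}$ activates the reshuffling principle, giving $\mathbf{u}\in I^m$, a contradiction.

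With $\mathbf{w}$ reduced to a single variable, relabelled as $x_{2k+1}$, the degree $\deg\mathbf{u}=2m-1$ is forced. Writing $M$ for the product of the middle edges of $W_{x_{2k+1}}$ (a submultiset of $\{g_1,\ldots,g_{m-1}\}$) and using that the outer-edge product equals $x_{2k+1}^2 M$, the walk's total edge product is $(Mx_{2k+1})^2$. Applying the Eulerian decomposition of the walk's edge multiset into simple multigraph cycles, and noting that each simple cycle has edge product equal to the square of the product of its vertices, we obtain $Mx_{2k+1}=\prod_{\text{cycles}}\prod V(\text{cycle})$. Since $2t+1$ is odd, at least one cycle is odd, say $C\colon x_1-x_2-\cdots-x_{2k+1}-x_1$, contributing $\prod V(C)=x_1x_2\cdots x_{2k+1}$. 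Each remaining even cycle of length $2\ell$ contributes its $\prod V$ as a product of $\ell$ matching edges of that cycle in $G$, accounting altogether for $t-k$ such edges. Combining these with the $m-1-t$ edges of $\{g_1,\ldots,g_{m-1}\}$ not used as middle edges yields $\mathbf{u}=(Mx_{2k+1})\cdot(g_1\cdots g_{m-1})/M=x_1x_2\cdots x_{2k+1}\cdot e_1\cdots e_{m-k-1}$, the claimed decomposition.
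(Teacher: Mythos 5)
Your setup matches the paper's: extract a factorization $\mathbf{u}=g_1\cdots g_{m-1}\mathbf{w}$, use Lemma~\ref{connectedness} as a ``reshuffling'' device, and for each variable of $\supp(\mathbf{w})$ extract an odd closed walk whose middle edges lie in the multiset $\{\{g_1,\ldots,g_{m-1}\}\}$; your concluding reconstruction of $\mathbf{u}$ via an Eulerian decomposition of that closed walk is also sound and is essentially the final sentence of the paper's proof.

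The genuine gap is exactly where you flag the ``delicate multiplicity bookkeeping.'' Your plan is to splice $W_x$, a short transit, and $W_y$ into a single odd $x$--$y$ walk and invoke the reshuffling principle, and you assert that the distance-one hypothesis ``keeps the splice short enough that no $g_i$ is reused beyond its available multiplicity.'' That is not what the distance hypothesis controls: it only bounds the length of the transit, not the overlap between the middle-edge multisets of $W_x$ and $W_y$. If those two multisets share an edge $e$ that occurs only once in $\{\{g_1,\ldots,g_{m-1}\}\}$, the spliced walk uses $e$ twice as a middle edge and the reshuffling lemma simply does not apply, no matter how short the transit is. The paper handles this by splitting on $F=\{\{f_1,\ldots,f_k\}\}\cap\{\{g_1,\ldots,g_\ell\}\}$, the multiset intersection of the middle edges of the two closed walks. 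When $F\neq\emptyset$ the distance hypothesis is not used at all: instead one picks a shared middle edge $f_s=g_t$, forms $\mathbf{u}_1=\dfrac{xy\,f_1\cdots f_k\,g_1\cdots g_\ell}{\prod_{e\in F}e}$ (which divides $\mathbf{u}$), and re-pairs its variables --- gluing the two walks precisely at the shared edge, e.g.\ using $x_{2s+1}y_{2t-1}$ --- to exhibit $\mathbf{u}_1$ as a product of $k+\ell-|F|+1$ edges, contradicting $\mathbf{u}_1\notin I^{k+\ell-|F|+1}$. Only in the complementary case $F=\emptyset$ does one set $\mathbf{u}_2=x f_1\cdots f_k\, y g_1\cdots g_\ell$, use that every odd closed walk contains an odd cycle, and apply $\dist(V(C_1),V(C_2))\le 1$ to produce either a bridging edge or a shared vertex allowing $\mathbf{u}_2$ to be re-paired into $k+\ell+1$ edges. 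So you have the dependence on the hypothesis backwards: the distance condition rescues the non-overlapping case, while the overlapping case needs a different, edge-sharing re-pairing argument that your proposal does not supply.

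Two smaller points. First, even in the $F=\emptyset$ case your plan of building an odd $x$--$y$ walk and re-invoking the lemma needs care: in the obvious concatenation $x_1\!-\!\cdots\!-\!x_{2k+1}\!-\!x_1\!-\!y_1\!-\!\cdots\!-\!y_{2\ell+1}\!-\!y_1$ the bridging edge $x_1y_1$ lands in a middle position $z_{2i}z_{2i+1}$, so it would have to belong to $\{\{g_1,\ldots,g_{m-1}\}\}$, which is not guaranteed; the paper avoids this by re-pairing $\mathbf{u}_2$ directly rather than producing a walk. Second, since $C_1,C_2$ are a priori only odd closed walks, one must (as the paper notes in passing) pass to odd cycles contained in them before the distance hypothesis applies; your Eulerian-decomposition observation makes this step available, so this part is recoverable from what you wrote.
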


\begin{proof} Let $\mathbf{u}$ be a monomial in $(I^m:\mathfrak{m})\setminus I^m$. Then $\mathbf{u}=e_1\cdots e_{m-1}\mathbf{v}$, where  $e_i\in E(G)$ for $i=1,\ldots, m-1$ and $\mathbf{v}$ is a monomial.  We need to show $\deg (\mathbf{v})=1$. Assume on the contrary that there are variables $x,y$ such that $xy$ divides $\mathbf{v}$. Since $x\mathbf{u}\in I^m$, it follows from Lemma~\ref{connectedness} that there is an integer $t\geq 1$ and  a walk $$z_1-z_2-z_3-\cdots-z_{2t}-z_{2t+1}-z_{2t+2}$$  in $G$
such that $z_1z_{2t+2}$ divides $x\mathbf{v}$ and $$\{\{z_2z_3, z_4z_5, \ldots, z_{2t}z_{2t+1} \}\}\subseteq\{\{e_1,\ldots,e_{m-1}\}\}.$$ Since $$z_1(z_2z_3)\cdots (z_{2t}z_{2t+1})z_{2t+2}=(z_1z_2)(z_3z_4)\cdots (z_{2t+1}z_{2t+2})\in I^{t+1},$$ we have  $$z_1z_{2t+2}e_1\cdots e_{m-1}\in I^m.$$ This implies that $z_1z_{2t+2}$ does not divide $\mathbf{v}$, since $\mathbf{u}\notin I^m$. Hence, we have  $$z_1z_{2t+2}|x\mathbf{v},  \qquad z_1z_{2t+2}\nmid \mathbf{v},\qquad  xy|\mathbf{v}.$$  From this it follows that   $z_1=z_{2t+2}=x$ and $x\neq y$.

 By the  argument above, we see that $x=z_1-z_2-z_3-\cdots-z_{2t}-z_{2t+1}-z_{2t+2}=x$ is an  odd closed walk. Hence, by the relabelling of vertices,
 there exist odd closed walks $C_1: x=x_1-x_2-x_3-\cdots-x_{2k}-x_{2k+1}-x_1=x$ and $C_2: y=y_1-y_2-\cdots -y_{2\ell}-y_{2\ell+1}-y_1=y$ such that
$$\{\{f_1,\ldots,f_{k}\}\}\subseteq \{\{e_1,\ldots,e_{m-1}\}\}$$ and $$\{\{g_1,\ldots,g_{\ell}\}\} \subseteq \{\{e_1,\ldots,e_{m-1}\}\},$$
where $f_i=x_{2i}x_{2i+1}$  for $i=1,\ldots,k$ and $g_i=y_{2i}y_{2i+1}$ for $i=1,\ldots,\ell$.

Denote by $F$ the multi-set $\{\{f_1,\ldots,f_{k}\}\}\cap \{\{g_1,\ldots,g_{\ell}\}\}$. We firstly  assume that  $F\neq \emptyset$. Then $$\mathbf{u}_1:=\frac{xyf_1\cdots f_k g_1\cdots g_{\ell}}{\prod_{e\in F}e} \mbox{\quad divides \quad} \mathbf{u},$$  and moreover, $$\frac{\mathbf{u}}{\mathbf{u}_1} \mbox{ is the product of some edges } .$$  This implies $$\mathbf{u}_1\in I^{k+\ell-|F|}\setminus I^{k+\ell-|F|+1}.$$ Let $s$ be the minimal of $i$ with $f_i\in F$ and let $1\leq t\leq \ell$ such that $f_s=g_t$. Then either $x_{2s}=y_{2t+1}$ and $x_{2s+1}=y_{2t}$ or $x_{2s}=y_{2t}$ and $x_{2s+1}=y_{2t+1}$. Suppose that $x_{2s}=y_{2t+1}$ and $x_{2s+1}=y_{2t}$. In this case, $\mathbf{u}_1$ could be written as
 $$\mathbf{u}_1=\prod_{i=0}^{s-1}( x_{2i+1}x_{2i+2})\cdot (x_{2s+1}y_{2t-1})\cdot \prod_{i=1}^{t-1}(y_{2i-1}y_{2i})\cdot  \prod_{i=t+1}^{\ell}(y_{2i}y_{2i+1})\cdot\frac{\prod_{i=s+1}^k f_i} {\prod_{e\in F\setminus \{\{f_s\}\}}e}.$$
By the choice of $s$, we see that $F\setminus \{\{f_s\}\} \subseteq \{\{f_{s+1},\cdots,f_k\}\}$. From this it follows that $\mathbf{u}_1$ is the product of edges and $\mathbf{u}_1\in I^{k+\ell-|F|+1}$. This is  a contradiction. The case that $x_{2s}=y_{2t}$ and $x_{2s}=y_{2t+1}$  yields a contradiction similarly.

Secondly, we assume that  $F=\emptyset$. Then $$\mathbf{u}_2:=xf_1\cdots f_{k}yg_1\cdots g_{\ell}=x_1x_2\cdots x_{2k+1}y_1y_2\cdots y_{2\ell+1}\in I^{k+\ell}\setminus I^{k+\ell+1}.$$
If $V(C_1)\cap V(C_2)=\emptyset$, there exist some vertex of $C_1$, say $x_1$, and some vertex of $C_2$, say $y_1$, such that $x_1y_1\in E(G)$,  since $\dist(V(C_1),V(C_2))\leq 1$ (Here, we use the observation that every odd closed walk contains an odd cycle). Thus,
$$\mathbf{u}_2=(x_1y_1) (x_2x_3) \cdots (x_{2k}x_{2k+1}) (y_2y_3) \cdots (y_{2k}y_{2k+1})\in I^{k+\ell+1},$$ a contradiction.
If $V(C_1)\cap V(C_2)\neq \emptyset$, we may harmlessly assume that $x_1=y_1$. Then
$$\mathbf{u}_2=(x_1y_{2\ell +1}) (x_2x_3) \cdots (x_{2k}x_{2k+1}) (y_1y_2) \cdots (y_{2k-1}y_{2k})\in I^{k+\ell+1}.$$
 This is also a contradiction.   The contradictions obtained above show that  $\deg(\mathbf{v})=1$ and  $\deg(\mathbf{u})=2m-1$.  Moreover, by the first paragraph of this proof, we may write  $\mathbf{u}$ as $\mathbf{u}=x_1x_2\cdots x_{2k+1}e_1\cdots e_{m-k-1}$, where $\{x_1,\ldots,x_{2k+1}\}$ are the vertex set of an odd cycle and  $e_i\in E(G)$ for $i=1,\cdots,m-k-1$. Here, we use the fact that if $z_1-z_2-\cdots-z_{2k+1}-z_1$ is an odd closed walk containing an odd cycle $x_1-x_2-\cdots-x_{2\ell+1}-x_1$ then $z_1\cdots z_{2k+1}$ is the product of $x_1\cdots x_{2\ell +1}$ and some edges.
  \end{proof}

  \begin{Example}
\label{higher}\em  Let $G_1,G_2$ be in Figure~\ref{G}, and set $I_i=I(G_i)$ for $i=1,2$. Then $x_1x_2x_3x_4x_5x_6\in (I_1^3:\mm)\setminus I_1^3$.  This illustrates that the requirement on the distance of odd cycles in Proposition~\ref{two} cannot be removed.

In contrast, if a monomial $\mathbf{u}\in (I_2^m:\mathfrak{m})\setminus I_2^m$, then $\deg (\mathbf{u})=2m-1.$ Thus,  the converse of Proposition~\ref{two} does not hold.
\begin{figure}[ht!]

\begin{tikzpicture}[line cap=round,line join=round,>=triangle 45,x=1.5cm,y=1.5cm]

\draw (4,1)--(4,3)--(5,2)--(5.7,2)--(6.4,2)--(7.4,3); \draw (6.4,2)--(7.4,1)--(7.4,3); \draw (4,1)--(5,2);

\draw (4.2,1) node[anchor=north east]{1};
\draw (5.2,1.8) node[anchor=north east]{3};
\draw (4.4,3.2) node[anchor=north east]{2};

\draw (5.8,1.8) node[anchor=north east]{7};
\draw (6.4,1.8) node[anchor=north east]{4};
\draw (7.4,3.2) node[anchor=north east]{5};
\draw (7.4,1) node[anchor=north east]{5};

\fill [color=black] (4,3) circle (1.5pt);
\fill [color=black] (4,1) circle (1.5pt);
\fill [color=black] (5,2) circle (1.5pt);

\fill [color=black] (5.7,2) circle (1.5pt);
\fill [color=black] (6.4,2) circle (1.5pt);
\fill [color=black] (7.4,3) circle (1.5pt);

\fill [color=black] (7.4,1) circle (1.5pt);

\draw (9,1)--(9,3)--(10,2)--(10.7,2)--(11.4,2)--(12.4,3); \draw (11.4,2)--(12.4,1)--(12.4,3); \draw (9,1)--(10,2);
\draw (10.7,2)--(10.7,3);
\fill [color=black] (9,3) circle (1.5pt);
\fill [color=black] (9,1) circle (1.5pt);
\fill [color=black] (10,2) circle (1.5pt);
\fill [color=black] (10.7,3) circle (1.5pt);

\fill [color=black] (10.7,2) circle (1.5pt);
\fill [color=black] (11.4,2) circle (1.5pt);
\fill [color=black] (12.4,3) circle (1.5pt);
\draw (6,0.5) node[anchor=north east]{$G_1$};

\fill [color=black] (12.4,1) circle (1.5pt);
\draw (11,0.5) node[anchor=north east]{$G_2$};

\end{tikzpicture}
\caption{}\label{G}
\end{figure}
\end{Example}

Combining Proposition~\ref{rank} with Proposition~\ref{two},  we obtain the following result.

 \begin{Corollary} \label{twoc} Under the same assumptions on $G$ as in Proposition~\ref{two},  the socle module $\Soc(I(G))$ is identified with an ideal of $\mathcal{F}(I)$.
 \end{Corollary}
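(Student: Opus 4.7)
The plan is to observe that this corollary is an immediate synthesis of the two preceding results, Proposition~\ref{rank} and Proposition~\ref{two}, so the proof essentially writes itself by unpacking the hypotheses.

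First I would recall the setup: $I = I(G)$ is a monomial ideal of $S = K[x_1,\ldots,x_n]$ generated in a single degree, namely $d=2$. Proposition~\ref{rank} then asserts that, whenever every monomial $u \in (I^m : \mm) \setminus I^m$ satisfies $\deg(u) = md - 1$ for all $m \geq 1$, the map $\varphi$ defined there embeds $\Soc(I)$ as an ideal of $\mathcal{F}(I)$. Thus my task reduces to verifying the degree hypothesis.

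Next I would invoke Proposition~\ref{two}: the hypothesis on $G$ (namely that $\dist(V(C_1), V(C_2)) \leq 1$ for any two odd cycles $C_1, C_2$ of $G$) is precisely what is required to conclude that every monomial $\mathbf{u} \in (I^m:\mm) \setminus I^m$ has $\deg(\mathbf{u}) = 2m - 1$. Since $d = 2$, this is the same as $\deg(\mathbf{u}) = md - 1$, exactly the numerical condition demanded by Proposition~\ref{rank}.

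Combining these two observations, Proposition~\ref{rank} applies to $I = I(G)$ and yields an injective $\mathcal{F}(I)$-module homomorphism $\varphi \colon \Soc(I(G)) \to \mathcal{F}(I)$, whose image is therefore an ideal of $\mathcal{F}(I)$ to which $\Soc(I(G))$ is isomorphic as an $\mathcal{F}(I)$-module. No obstacle arises — the entire content is already packaged in the two earlier results; this corollary merely records their combination.
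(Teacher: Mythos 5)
Your proof is correct and follows exactly the same route as the paper: the paper simply states "Combining Proposition~\ref{rank} with Proposition~\ref{two}, we obtain the following result," and your write-up spells out precisely that combination — Proposition~\ref{two} supplies the degree condition $\deg(\mathbf{u}) = 2m-1 = md-1$ (with $d=2$) for every monomial $\mathbf{u} \in (I^m:\mm)\setminus I^m$, which is the hypothesis Proposition~\ref{rank} needs to produce the injective $\mathcal{F}(I)$-module map into $\mathcal{F}(I)$.
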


We now come to the main result of this section.
If $G$ is a
unicyclic graph with the  unique odd cycle $C_{2k+1}$, we use the notion $E^*(G)$ to stand for the following subset of $E(G)$:
\[
E^{*}(G)=\{ e\in E(G)\setminus E(C_{2k+1})\:\;
 \text{ $e$ is not a leaf of $G$}  \}.
 \]
Here, an edge $e=y_1y_2\in E(G)$ is called a {\em leaf} of $G$ if either $y_1$ or $y_2$ is adjacent to only one vertex of $G$.

 \begin{Proposition}
 \label{u} Let $G$ be a
unicyclic graph containing an odd cycle $C_{2k+1}$ with
$\text{V}(C_{2k+1})=\{x_1,x_2,\ldots, x_{2k+1}\}$.  Let  $u\in S$ a monomial and $m$ a positive integer.  Then
$$\mathbf{u }\in (I^m:\mm)\setminus {I^m}\, \text{ if\,
and\, only\, if}\,\, \mathbf{u}=(x_1x_2\cdots x_{2k+1}) \prod_{e\in E_1}e^{m_e},$$ where $m_e\geq 1$ for all $e\in E_1$, $E^*(G)\subseteq E_1$,  and $\sum_{e\in E_1}m_e=m-k-1$.
\end{Proposition}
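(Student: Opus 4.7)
The plan is to combine Proposition~\ref{two} with a case analysis into forward and backward directions, reducing everything to the containment $E^*(G) \subseteq E_1$. Since $G$ is unicyclic, any two odd cycles in $G$ coincide, so the distance hypothesis of Proposition~\ref{two} is vacuous and applies. Hence if $\mathbf{u} \in (I^m:\mm)\setminus I^m$ then $\mathbf{u} = x_1\cdots x_{2k+1}\prod_{e\in E_1}e^{m_e}$ with $\sum_{e\in E_1} m_e = m-k-1$, and the only remaining content in the forward direction is to show $E^*(G) \subseteq E_1$.

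For the forward direction, I would argue by contradiction: suppose $e_0 = ab \in E^*(G)\setminus E_1$. Since $e_0$ is a non-cycle edge in the unicyclic graph $G$, it is a bridge; deleting $e_0$ splits $G$ into $G_1$ (containing the cycle and, WLOG, $a$) and $G_2$ (a tree containing $b$). Because $e_0$ is not a leaf edge, $b$ has a neighbor in $G_2$, so $G_2$ has $\geq 2$ vertices and hence $\geq 2$ leaves, at most one of which is $b$; pick a leaf $w$ of $G$ with $w\in V(G_2)\setminus\{b\}$. Since $e_0\notin E_1$, the monomial factors as $\mathbf{u} = \mathbf{u}_1\mathbf{u}_2$ with $\mathbf{u}_i$ supported on $V(G_i)$. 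I claim $w\mathbf{u}\notin I^m$, contradicting $\mathbf{u}\in(I^m:\mm)$. In any decomposition of $w\mathbf{u}$ into $m$ edges, every copy of $w$ is forced to pair with the unique $G$-neighbor $w'$ of $w$; a parity count (using that $\deg(\mathbf{u}_1)$ is odd) forces at least one copy of $e_0$ in the decomposition, and the number of such crossings is bounded above by the $b$-multiplicity of $\mathbf{u}_2$. Iterating the leaf-pairing constraint along the path from $w$ to $b$ inside the tree $G_2$ shows that even a single crossing of $e_0$ overtaxes the available edge-budget on this path. This iteration is the hardest step, since one must carefully track how mandatory pairings propagate through $G_2$.

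For the backward direction, $\deg(\mathbf{u}) = (2k+1) + 2(m-k-1) = 2m-1 < 2m$ immediately gives $\mathbf{u}\notin I^m$. To show $v\mathbf{u}\in I^m$ for every vertex $v$, I decompose $v\mathbf{u}$ into $m$ edges explicitly. If $v = x_i$ lies on the cycle, the factor $x_i\cdot x_1\cdots x_{2k+1}$ decomposes into $k+1$ cycle edges via $(x_ix_{i+1})(x_{i+2}x_{i+3})\cdots(x_{i+2k}x_i)$ (indices modulo $2k+1$), and combined with the $m-k-1$ edges in $\prod_{e\in E_1}e^{m_e}$ we obtain $m$ edges. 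If $v$ is not on the cycle, take the unique path $v = v_0 - v_1 - \cdots - v_r = x_i$ from $v$ to the cycle; each internal edge $v_jv_{j+1}$ for $1\leq j\leq r-1$ is non-cycle and non-leaf, hence lies in $E^*(G)\subseteq E_1$, so each $v_j$ for $1\leq j\leq r-1$ has multiplicity $\geq 2$ in $\mathbf{u}$. With this I construct the decomposition using ``shifted'' path edges, duplicated internal path edges, and an appropriate cycle partition: for $r$ odd, use $(v_0v_1)(v_2v_3)\cdots(v_{r-1}v_r)$ together with the extra copies $(v_2v_3)(v_4v_5)\cdots(v_{r-1}v_r)$ and a $k$-edge cycle arc missing $x_i$; for $r$ even, use $(v_0v_1)(v_2v_3)\cdots(v_{r-2}v_{r-1})$ together with a $(k+1)$-edge cycle partition in which $x_i$ is used twice. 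The vertex-multiplicity verification is routine in both cases.
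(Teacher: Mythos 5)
Your reduction to Proposition~\ref{two} is exactly right, and your backward direction is essentially the paper's argument (explicit edge decompositions of $v\mathbf{u}$, splitting on whether $v$ is on the cycle and on the parity of the path length to the cycle). Two small remarks there: the claim that each $v_j$ with $1\le j\le r-1$ has multiplicity $\geq 2$ in $\mathbf{u}$ fails for $j=1$, since $v_0v_1$ need not lie in $E_1$ (this does not hurt the construction, which only uses $v_1$ once); and in the $r$ even case your list of edges is short by the factors $v_2v_3,\dots,v_{r-2}v_{r-1}$ --- the degrees only match if you pull out just the alternating path edges $(v_1v_2)(v_3v_4)\cdots(v_{r-1}v_r)$ as the paper does, rather than all of $v_1v_2,\dots,v_{r-1}v_r$.

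The forward direction, however, has a genuine gap: the vertex you choose does not witness the contradiction. You delete a hypothetically missing edge $e_0=ab\in E^*(G)\setminus E_1$, form the tree $G_2$ containing $b$, pick a leaf $w$ of $G$ with $w\in V(G_2)\setminus\{b\}$, and claim $w\mathbf{u}\notin I^m$. This is false in general. Take $G$ to be a triangle $x_1x_2x_3$ with a pendant path $x_1-y_1-y_2-y_3$, let $e_0=x_1y_1$ (so $a=x_1$, $b=y_1$, and the only admissible leaf in $G_2\setminus\{b\}$ is $w=y_3$), and let $\mathbf{u}=x_1x_2x_3(y_1y_2)^2$ with $m=4$. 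Then
\[
y_3\mathbf{u}=x_1x_2x_3\,y_1^2y_2^2y_3=(x_1y_1)(x_2x_3)(y_1y_2)(y_2y_3)\in I^4,
\]
so $w\mathbf{u}\in I^m$ and no contradiction is obtained. (The monomial $\mathbf{u}$ is indeed not in $(I^4:\mm)$, but the obstruction is at $y_2$, not at the leaf $y_3$: one checks $y_2\mathbf{u}=x_1x_2x_3y_1^2y_2^3\notin I^4$.) Your parity count correctly shows the number $c$ of crossings over $e_0$ is odd and at least one, and the bound $c\le\operatorname{mult}_b(\mathbf{u}_2)$ is also correct, but the ``iteration of the leaf-pairing constraint'' cannot rule out $c=1$ here --- the crossing is absorbed by the available $b$-multiplicity and the remaining part of $G_2$ decomposes into edges. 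The paper instead takes the vertex $z$ adjacent to $b$ on the far side of $e_0$ (so that $\dist(z,V(C))=\dist(b,V(C))+1$), shows $z\mathbf{u}\in I^m$ would give, via Lemma~\ref{connectedness}, an odd walk from $z$ to the cycle whose even-position edges all lie in $\{\{m_ee:e\in E_1\}\}\cup\{\{x_2x_3,\dots,x_{2k}x_{2k+1}\}\}$, and then extracts $e_0\in E_1$ by analyzing a shortest such walk. Replacing your leaf $w$ by this $z$ and invoking Lemma~\ref{connectedness} is the missing idea.
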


\begin{proof}  Sufficiecy.

Note that $\deg(\mathbf{u})=2k+1+2(m-k-1)=2m-1$, it is clear
that $\mathbf{u}\not\in I^m$.  We only need to show $y\mathbf{u}\in I^m$ for any $y\in V(G)$. Let $y\in V(G)$.

Case (i):\, $\dist(y,V(C_{2k+1}))=0$,  that is, $y\in V(C_{2k+1})$.
We may harmlessly assume that $y=x_1$. Then $$\mathbf{u} y=
(x_1x_2\cdots x_{2k+1}) (\prod_{e\in E_1}e^{m_e})
x_1= [(x_1x_2)(x_3x_4) \cdots (x_{2k+1}x_1))]\cdot \prod_{e\in
E_1}e^{m_e} \in I^m.$$

Case (ii):\, $\dist(y,C_{2k+1})>0$. Then
there exits  $x \in V(C_{2k+1})$, say $x=x_{1}$, such that $\dist(y,V(C_{2k+1}))=\dist(x,y)$.
Write the unique path between $y$ and $x_1$ as:
$$y=y_0-y_1-y_2-\cdots-y_{\ell}-x_1.$$ Suppose that $\ell =2k$ for some $k\geq 1$. Then, since $y_1y_2, y_3y_4,\ldots, y_{2k-1}y_{2k}$ are pairwise different edges   belonging to $E^*(G)$, we may write $y\mathbf{u}$ as: $$y\mathbf{u}= (x_1\cdots x_{2k+1})y_0(y_1y_2)\cdots (y_{2k-1}y_{2k})\mathbf{v},$$ where $\mathbf{v}$ is the product of  some edges. It follows that $$y\mathbf{u}=(y_0y_1)\cdots (y_{2k-2}y_{2k-1}) (y_{2k}x_1)(x_2x_3)\cdots (x_{2k}x_{2k+1})\mathbf{v}$$ and $y\mathbf{u}$ is also the product of  edges.  Since $\deg (y\mathbf{u})=2m$,  we have $y\mathbf{u}\in I^m$.
Suppose next that $\ell=2k+1$. Then we may write $y\mathbf{u}$ as $$y\mathbf{u}=(x_1\cdots x_{2k+1})y_0(y_1y_2)\cdots (y_{2k-1}y_{2k})(y_{2k+1}x_1)\mathbf{v}$$ such that $\mathbf{v}$ is the product of edges. Thus, $$y\mathbf{u}=(y_0y_1)\cdots (y_{2k}y_{2k+1})(x_1x_2)\cdots (x_{2k-1}x_{2k})(x_1x_{2k+1})\mathbf{v}$$ and it is also the product of edges. Note that $\deg(y\mathbf{u})=2m$, we have $y\mathbf{u}\in I^m$.

Necessity.

Let $\mathbf{u} \in (I^m:\mm)\setminus {I^m}$ be a monomial.  Then, according to Proposition~\ref{two}, there exists a subset $E_1\subseteq E(G)$ such that  $$\mathbf{u}=(x_1\cdots x_{2k+1})\prod_{e\in E_1}e^{m_e},$$ where $m_e\geq 1$ for any $e\in E_1$ and $\sum_{e\in E_1}m_e=m-k-1$. We only need to prove $E^*(G)\subseteq E_1$.

Let $e=y_1y_2\in E^*(G)$. There exits a unique vertex in $C_{2k+1}$, say  $x_1$, such that $\dist(x_1,y_i)=\dist(V(C_{2k+1}),y_i)$ for $i=1,2$. Moreover, we may harmlessly assume that $\dist(y_2,x_1)=\dist(y_1,x_1)+1$. It is clear that $y_2\notin \{x_1,\ldots,x_{2k+1}\}$. Since $y_1y_2$ is not a leaf, there exists a vertex $z$ of $G$ such that $\dist(z,x_1)=\dist(y_2,x_1)+1$ and $zy_2\in E(G)$.

 Since $z\mathbf{u}=x_1z(x_2x_3)\cdots (x_{2k}x_{2k+1})\prod_{e\in E_1}e^{m_e}\in I^m$,  it follows from Lemma~\ref{connectedness} that there exists an odd walk connecting $z$ to $x_1$:
\begin{equation}
\label{walk} \tag{W}
z=z_1-z_2-z_3-\cdots-z_{2t}-z_{2t+1}-z_{2t+2}=x_1
\end{equation}
such that
\begin{equation} \label{multiset} \tag{I}
\{\{ z_2z_3,\ldots,z_{2t}z_{2t+1}\}\}\subseteq \{\{m_ee: e\in E_1, x_2x_3, \ldots, x_{2k}x_{2k+1}\}\},
 \end{equation} where $m_ee$ means that $e$ appears $m_e$ times.  We may assume that $W$ has the shortest  length among such walks.

Let $\ell$ be the minimal of numbers $\{1,2,\ldots, 2t+2\}$ such that $z_{\ell}=x_1$. Then, any subset of $\{z_1,\ldots, z_{\ell-1}\}$ cannot form  any odd closed walk in $G$, since $\{x_1,\ldots,x_{2k-1}\}\cap \{z_1,\ldots, z_{\ell-1}\}=\emptyset$.
We claim that $z\notin \{z_2,\ldots, z_{\ell-1}\}$. It is clear that $z\neq z_2$. If $z=z_{2s}$ for some $s$ with $4\leq 2s\leq \ell-1$, then
$z=z_1-z_2-\cdots-z_{2s}=z$ is an odd closed walk, which is impossible. If $z=z_{2s+1}$ for some $s>0$ with $2s+1\leq \ell-1$, then $z=z_{2s+1}-z_{2s+2}-\cdots-z_{2t}-z_{2t+1}-z_{2t+2}=x_1$ is a shorter walk than (\ref{walk}), a contradiction.
Thus,  $z\notin \{z_2,\ldots, z_{\ell-1}\}$, as claimed. From this it follows that $\dist(z_2,x_1)=\dist(z_1,x_1)-1$  and $z_2=y_2$. In a similar way, we have $y_2\notin \{z_3,\ldots, z_{\ell-1}\}$ and  $z_3=y_1$.  Hence,   $e\in E_1$ by (\ref{multiset}).
\end{proof}

In the following, if $G$ is a unicyclic graph with odd cycle $x_1-x_2-\cdots-x_{2k+1}-x_1$,  we use $d_G$ to denote the number $$d_G=|E^*(G)|+k+1$$ and use $\mathbf{u}_G$ to denote the monomial $$\mathbf{u}_G=(x_1\cdots x_{2k+1})\prod_{e\in E^*(G)}e.$$  It is clear from Proposition~\ref{u} that $\mathbf{u}_G+I^{d_G}$ is the unique minimal generator of $\Soc(I(G))$.

 \begin{Theorem}
 \label{fiber} Let $G$ be a unicyclic nonbipartite graph. Then $\Soc(I(G))$ is  a free $\mathcal{F}(I(G))$-module of rank 1. More precisely,
$$\Soc(I(G))\cong \mathcal{F}(I(G))(-d_G+1).$$
\end{Theorem}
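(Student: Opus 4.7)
The plan is to construct an explicit $\mathcal{F}(I(G))$-linear isomorphism $\varphi\colon \mathcal{F}(I(G))(-d_G+1)\to\Soc(I(G))$ given by multiplication with $\mathbf{u}_G$, and then verify surjectivity using Proposition~\ref{u} and injectivity via a polynomial-degree count. I would define $\varphi$ on representatives by $\varphi(\mathbf{v}+\mm I^r)=\mathbf{v}\mathbf{u}_G+I^{r+d_G}$ for a monomial $\mathbf{v}\in I^r$. Well-definedness, the fact that the image lies in $\Soc(I(G))$, and $\mathcal{F}(I(G))$-linearity all follow from $\mathbf{u}_G\in(I^{d_G}:\mm)$: one has $\mm(\mathbf{v}\mathbf{u}_G)\subseteq \mathbf{v}I^{d_G}\subseteq I^{r+d_G}$, and if $\mathbf{v}=y\mathbf{v}'\in\mm I^r$ then $\mathbf{v}\mathbf{u}_G=\mathbf{v}'(y\mathbf{u}_G)\in I^{r+d_G}$.

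For surjectivity, take any nonzero monomial class $\mathbf{u}+I^m\in\Soc(I(G))$. By Proposition~\ref{u}, we can write $\mathbf{u}=(x_1\cdots x_{2k+1})\prod_{e\in E_1}e^{m_e}$ with $E^*(G)\subseteq E_1$ and $\sum m_e=m-k-1$. Pulling out the cycle monomial together with a single copy of each edge in $E^*(G)$ yields $\mathbf{u}=\mathbf{u}_G\cdot\mathbf{w}$, where $\mathbf{w}=\prod_{e\in E^*(G)}e^{m_e-1}\prod_{e\in E_1\setminus E^*(G)}e^{m_e}$ is a product of exactly $m-d_G$ edges of $G$. Thus $\mathbf{w}\in I^{m-d_G}$ and $\varphi(\mathbf{w}+\mm I^{m-d_G})=\mathbf{u}+I^m$, so $\mathbf{u}_G+I^{d_G}$ generates $\Soc(I(G))$ over $\mathcal{F}(I(G))$.

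For injectivity, the key observation is that since $I(G)$ is equigenerated in polynomial degree $2$, a monomial $\mathbf{v}\in I^r$ lies outside $\mm I^r$ exactly when $\deg\mathbf{v}=2r$; any extra variable factor can be stripped off to leave an element still in $I^r$, forcing $\mathbf{v}\in\mm I^r$. So for a nonzero class $\bar{\mathbf{v}}\in\mathcal{F}(I(G))_r$ with monomial representative $\mathbf{v}$, one has $\deg(\mathbf{v}\mathbf{u}_G)=2r+(2d_G-1)=2(r+d_G)-1$, strictly below the minimal polynomial degree $2(r+d_G)$ attained in $I^{r+d_G}$. Hence $\mathbf{v}\mathbf{u}_G\notin I^{r+d_G}$ and $\varphi(\bar{\mathbf{v}})\ne 0$, so $\varphi$ is injective.

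The main obstacle is not a genuine difficulty but careful bookkeeping: extracting from Proposition~\ref{u} the factorization in which $\mathbf{w}$ is genuinely a product of exactly $m-d_G$ edges, and verifying that the degree shift matches the statement. Both verifications then reduce to elementary degree counts, relying crucially on the fact that $I(G)$ is equigenerated in degree $2$, which is what pins down the monomials of $I^r\setminus\mm I^r$ as products of precisely $r$ edges.
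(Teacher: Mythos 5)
Your proof is correct and essentially mirrors the paper's argument, but it is organized more directly. Both proofs rest on Proposition~\ref{u} for the cyclicity/surjectivity step. The paper's proof then invokes Proposition~\ref{twoc} (which packages Proposition~\ref{rank} and Proposition~\ref{two}): it embeds $\Soc(I(G))$ into the integral domain $\mathcal{F}(I(G))$ via multiplication by a fixed variable, identifies the image as a principal ideal, and concludes freeness because a nonzero principal ideal in a domain is free. You instead build the map in the opposite direction, $\varphi\colon \mathcal{F}(I(G))(-d_G+1)\to \Soc(I(G))$, multiplication by $\mathbf{u}_G$, and verify both surjectivity (via Proposition~\ref{u}) and injectivity (via the degree count: $I(G)$ is equigenerated in degree $2$, so $I^r\setminus \mm I^r$ consists exactly of degree-$2r$ monomials, and $\deg(\mathbf{v}\mathbf{u}_G)=2(r+d_G)-1<2(r+d_G)$ forces $\mathbf{v}\mathbf{u}_G\notin I^{r+d_G}$). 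The degree-count idea is the same one underlying the injectivity of $\varphi$ in Proposition~\ref{rank}, so the two arguments are mathematically twins, just routed inward versus outward. Your version is slightly more self-contained since it avoids the detour through the domain structure of $\mathcal{F}(I(G))$.

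One small point worth tightening in the injectivity step: not every nonzero class in $\mathcal{F}(I(G))_r$ has a single monomial representative --- a general element is a $K$-linear combination of the monomial basis classes. The argument still works, but should be phrased as: the distinct degree-$2r$ monomials of $I^r$ form a $K$-basis of $\mathcal{F}(I(G))_r$, multiplication by $\mathbf{u}_G$ sends them to distinct monomials of degree $2(r+d_G)-1$, none of which lies in the monomial ideal $I^{r+d_G}$, and distinct monomials outside a monomial ideal remain linearly independent modulo that ideal. With that restatement, injectivity follows cleanly.
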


 \begin{proof} Denote $I(G)$ by $I$.  In view of  Proposition~\ref{u}, we have $$\Soc(I)=(\mathbf{u}_G+I^{d_G})\mathcal{F}(I)$$ and so it is a cyclic $\mathcal{F}(I)$ -module.  According to Proposition~\ref{twoc}, $\mathbf{u}_G+I^{d_G}$ has no non-zero annihilator in $\mathcal{F}(I)$, namely, $f(\mathbf{u}_G+I^{d_G})\neq 0$ for any $0\neq f\in \mathcal{F}(I)$.
Thus, $\Soc(I)$ is  a free $\mathcal{F}(I)$-module of rank 1. The last isomorphism follows since   $\mathbf{u}_G+I^{d_G}\in \Soc(I)$ is of degree $d_G-1$.
 \end{proof}

The converse of Theorem~\ref{fiber} is not true as shown by the following example.

\begin{Example}  \em Let $G$ be the graph obtained by adding an edge $x_1x_3$ to the 5-cycle $x_1-x_2-\cdots-x_5-x_1$. Then $x_1x_2x_3+I(G)^2$  is the unique minimal generator of $\Soc(I(G))$ and so $\Soc(I(G))$ is also a free  $\mathcal{F}(I(G))$-module of rank 1.
\end{Example}

In the final part of this section, we give an application of our results to the depth stability index of an edge ideal. Let $I$ be an ideal of $S$. Recall that the stability index of $I$, denoted by $\mbox{dsatb}(I)$, is defined to be $$\mbox{dsatb}(I)=\min\{t\geq 0:\depth(I^t)=\depth(I^{t+i}) \mbox{\ for all \ } i\geq 0 \}.$$

Let $G$ be any simple connected graph. By a {\it spanning subgraph} of $G$, we mean a connected  subgraph  of $G$ with the same vertex set as $G$.
Thus, a {\it spanning unicyclic nonbipartite subgraph} of $G$ means a spanning subgraph of $G$ which is  unicyclic and nonbipartite.
Assume now  $H$ is a spanning unicyclic nonbipartite subgraph of $G$. It is easy to check that $\mathbf{u}_H\in (I(G)^{d_H}:\mm)\setminus I(G)^{d_H}$ and  $\mathbf{u}_H+I(G)^{d_H}$ is a nonzero homogeneous element of degree $d_H-1$ in $\Soc(I(G))$. Note that $\mathrm{dstab}(I(G))=\min\{i+1: \Soc(I(G))_i\neq 0\}$, the following result follows.

\begin{Corollary} \label{dstab}Let $G$ be a simple connected nonbipartite graph. Then $$\mathrm{dstab}(I(G))\leq \min\{d_H: H \mbox{\ is a spanning unicyclic nonbipartite subgraph of\ }G \}.$$
\end{Corollary}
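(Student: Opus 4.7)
The plan is to build on the identity
\[
\mathrm{dstab}(I(G)) = \min\{i+1 : \Soc(I(G))_i \neq 0\}
\]
recorded just above the corollary. Thus, for each spanning unicyclic nonbipartite subgraph $H$ of $G$, it suffices to produce a nonzero element of $\Soc(I(G))$ sitting in degree $d_H - 1$. The natural candidate is the coset $\mathbf{u}_H + I(G)^{d_H}$ attached to $H$ in the definition preceding Proposition~\ref{u}. Once its socle membership and nontriviality are checked for the ambient ideal $I(G)$ (rather than just for $I(H)$), the bound $\mathrm{dstab}(I(G)) \leq d_H$ follows, and minimising over $H$ gives the corollary.

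For the socle condition, Proposition~\ref{u} applied to the ideal $I(H)$ yields $x\mathbf{u}_H \in I(H)^{d_H}$ for every variable $x$. Since $H$ is spanning we have $V(H) = V(G)$, and since $I(H) \subseteq I(G)$ this lifts to $x\mathbf{u}_H \in I(G)^{d_H}$ for every $x \in V(G)$, whence $\mathbf{u}_H \in (I(G)^{d_H} : \mm)$. For nontriviality, a direct degree count gives $\deg(\mathbf{u}_H) = (2k+1) + 2|E^*(H)| = 2d_H - 1$, where $2k+1$ is the length of the unique odd cycle of $H$. Every nonzero monomial in $I(G)^{d_H}$ has degree at least $2d_H$, so $\mathbf{u}_H \notin I(G)^{d_H}$. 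Combining these observations shows that $\mathbf{u}_H + I(G)^{d_H}$ is a nonzero homogeneous element of $\Soc(I(G))_{d_H - 1}$, so $\mathrm{dstab}(I(G)) \leq d_H$, and taking the minimum over all spanning unicyclic nonbipartite subgraphs $H$ of $G$ produces the claimed inequality.

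No step presents a substantial obstacle: the real structural work has already been packaged inside Proposition~\ref{u}, which certifies the socle behaviour of $\mathbf{u}_H$ relative to $I(H)$. The transfer from $H$ to the ambient graph $G$ is immediate from the containment $I(H) \subseteq I(G)$ combined with the spanning hypothesis $V(H) = V(G)$, and the non-membership $\mathbf{u}_H \notin I(G)^{d_H}$ is a routine comparison of degrees.
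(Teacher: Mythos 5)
Your argument is correct and follows the same route the paper takes, which simply asserts ``it is easy to check that $\mathbf{u}_H\in (I(G)^{d_H}:\mm)\setminus I(G)^{d_H}$'' and then invokes the identity $\mathrm{dstab}(I(G))=\min\{i+1:\Soc(I(G))_i\neq 0\}$. You have usefully filled in the omitted verification: socle membership transfers from $H$ to $G$ because $V(H)=V(G)$ and $I(H)^{d_H}\subseteq I(G)^{d_H}$, while non-membership in $I(G)^{d_H}$ is the degree count $\deg(\mathbf{u}_H)=2d_H-1<2d_H$.
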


  Let $H$ be  a spanning unicyclic nonbipartite subgraph of $G$ such that the unique cycle of $H$ has  length $2k-1$, where $2k-1$ be the maximal length of odd cycles of $G$. Then $$d_H=|V(H)|-\epsilon_0(H)-k+1\leq |V(G)|-\epsilon_0(G)-k+1,$$ where $\epsilon_0(\bullet)$ is the number of leaves of a simple  graph $\bullet$.
From this fact, we see that  Corollary~\ref{dstab} implies \cite[Proposition 2.4]{T}, which says $\mathrm{dstab}(I(G))\leq |V(G)|-\epsilon_0(G)-k+1,$ if $G$ is a connected nonbipartite graph.

\begin{Example} \em Let $G$ be the complete graph with $2k-1$ vertices, where $k\geq 2$. According to Corollary~\ref{dstab}, we see that $\mathrm{dstab}(I(G))\leq 2$ and so $\mathrm{dstab}(I(G))=2$. However, by \cite[Proposition 2.4]{T}, we can only obtain $\mathrm{dstab}(I(G))\leq k$.
\end{Example}

The following example illustrates the inequality in Corollary~\ref{dstab} may be strict.
\begin{Example}\em
Let $G$ be the graph depict in Figure~\ref{K} and $I$ the edge ideal of $G$. Then $\Soc(I)$ is minimally generated by $x_1x_2x_3x_4x_5+I^3$ and  $x_1x_2x_3^3x_4x_5+I^4$ by  CoCoA \cite{A-B}. This implies  $\mbox{dstab} (I)=3$. Let $H_1=G\setminus \{x_1x_2\}$, $H_2=G\setminus \{x_4x_5\}$,
$H_3=G\setminus \{x_3x_1\}$, $H_4=G\setminus \{x_3x_2\}$, $H_5=G\setminus \{x_3x_4\}$, and $H_6=G\setminus \{x_3x_5\}$. Then $H_1,\ldots, H_6$ are all the  spanning unicyclic nonbipartite subgraphs of $G$. However, $d_{H_i}=4$ for $i=1,\ldots,6$.
\begin{figure}[ht!]

\begin{tikzpicture}[line cap=round,line join=round,>=triangle 45,x=1.5cm,y=1.5cm]

\draw (4.7,1)--(4.7,3)--(5.7,2)--(6.7,3); \draw (6.7,3)--(6.7,1)--(5.7,2); \draw (4.7,1)--(5.7,2);
\draw (4.7,1)--(3.7,1); \draw (4.7,3)--(3.7,3); \draw (6.7,1)--(7.7,1); \draw (6.7,3)--(7.7,3);

\draw (4.7,1) node[anchor=north east]{1};
\draw (5.6,2.2) node[anchor=north east]{3};
\draw (5,3.2) node[anchor=north east]{2};
\draw (3.7,3.2) node[anchor=north east]{6};
\draw (6.9,1) node[anchor=north east]{5};
\draw (3.7,1) node[anchor=north east]{7};

\draw (6.7,3.2) node[anchor=north east]{4};
\draw (8.1,3.2) node[anchor=north east]{8};

\fill [color=black] (6.7,3) circle (1.5pt);
\fill [color=black] (6.7,1) circle (1.5pt);
\fill [color=black] (5.7,2) circle (1.5pt);
\draw (8.1,1) node[anchor=north east]{9};

\fill [color=black] (4.7,3) circle (1.5pt);
\fill [color=black] (4.7,1) circle (1.5pt);

\fill [color=black] (3.7,1) circle (1.5pt);
\fill [color=black] (3.7,3) circle (1.5pt);

\fill [color=black] (7.7,3) circle (1.5pt);
\fill [color=black] (7.7,1) circle (1.5pt);

\end{tikzpicture}
\caption{}\label{K}
\end{figure}

\end{Example}

\section{Polymatroidal Ideals}

In this section we want to  investigate the socle modules $\Soc^*(I)$ of a polymatroidal ideal.
We refer to \cite{H-H-1} or \cite[Chapter 12]{H-H-b} for the definition and basic properties of polymatroidal ideals.

\medskip

 Let $I$ be  a polymatroidal ideal generated in degree $d$, then $I$ has a linear resolution. Recall that $\mathrm{soc}(I)$ is  the monomial ideal generated in degree $d-1$ such that $(I: \mm)=I+\mathrm{soc}(I)$. The first basic question in our context is
 \medskip

 {\em ``is $soc(I)$ always a polymatroial ideal if $I$ is a polymatroidal ideal?''}
\medskip

 We cannot answer this question in its full generality. In what  follows we confine our research to a special type of polymatroidal ideals--- PLP-polymatroidal ideals,  which were introduced firstly in \cite{S} and were redefined and studied further in \cite{L}.

Denote by $G(I)$ the unique minimal set of generators of a monomial ideal $I$.
Let $\mathbf{a}=(a_1,\ldots,a_n), \mathbf{b}=(b_1,\ldots,b_n),\bm{\alpha}=(\alpha_1,\ldots, \alpha_n),\bm{\beta}=(\beta_1,\ldots,\beta_n)$ be vectors in $\mathbb{Z}_+^n$. Assume that  $\alpha_1\leq \alpha_2\leq \cdots\leq \alpha_n=d$, $\beta_1\leq \beta_2\leq \cdots \leq \beta_n=d$ and $a_i\leq b_i$ and $\alpha_i\leq \beta_i$ for $i=1,\ldots,n$. Here, $d$ is a given integer. We also assume that $\alpha_1=a_1$ and $\beta_1=b_1$. Recall from \cite{L} that the  {\em  PLP-polymatroidal ideal}  $I$ of type $(\mathbf{a},\mathbf{b}|\bm{\alpha},\bm{\beta})$ is the polymatroidal ideal in $K[x_1,\ldots,x_n]$ generated by $\mathbf{x}^{\mathbf{u}}$  with $\mathbf{u}=(u_1,\ldots,u_n)$ satisfying
\begin{equation}
\begin{split}
\label{definition}
a_i\leq u_i\leq b_i, \mbox{\quad for\quad   } i=1,\ldots,n,\\
\alpha_i\leq u_1+\cdots+ u_i\leq \beta_i, \mbox{\quad for\quad   } i=1,\ldots,n.
\end{split}
\end{equation}

\medskip
The PLP-polymatoidal ideals generalizes ideals of Veronese type greatly.  Two special types of PLP-polymatroidal ideals  were studied in \cite{L} and it was shown among others that $\depth (S/I^k)$ and $\mathrm{Ass}(S/I^k)$ become constant at the same $k$ for such ideals $I$.
\medskip

 We call the PLP-polymatroidal ideal of type $(\mathbf{a},\mathbf{b}|\bm{\alpha},\bm{\beta})$ to be {\it basic} if $\mathbf{a}=0$. Since every PLP-polymatroidal ideal is the product of  a basic  PLP-polymatroidal ideal and  a monomial, we will only consider basic  PLP-polymatroidal ideals hereafter.

\begin{Lemma} \label{solution} Assume that $\mathbf{b}, \bm{\alpha},\bm{\beta}$ satisfy the conditions stated before and assume further that $\mathbf{a}=\mathbf{0}$.  The system (\ref{definition}) of inequalities has an integral solution if and only if $\beta_i+b_{i+1}+\cdots+b_j\geq \alpha_j$ for any pair $i,j$ with $1\leq i\leq j\leq n$.
\end{Lemma}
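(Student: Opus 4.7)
The plan is to prove necessity directly from the defining inequalities and sufficiency by an explicit greedy construction of the partial sums $s_i := u_1+\cdots+u_i$. For necessity, suppose $(u_1,\ldots,u_n)$ satisfies (\ref{definition}) and fix indices $1\leq i\leq j\leq n$. Splitting
\[
\sum_{k=1}^{j}u_k \;=\; \sum_{k=1}^{i}u_k \;+\; \sum_{k=i+1}^{j}u_k,
\]
the first summand is at most $\beta_i$ by the partial-sum constraint and the second is at most $b_{i+1}+\cdots+b_j$ by the coordinate-wise bounds, while the entire sum is at least $\alpha_j$. Combining these three estimates yields $\beta_i+b_{i+1}+\cdots+b_j\geq\alpha_j$.

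For sufficiency, I would construct the $s_i$ by the greedy recursion $s_0=0$ and $s_i=\min(\beta_i,\,s_{i-1}+b_i)$ for $i=1,\ldots,n$, and then set $u_i=s_i-s_{i-1}$. The routine verifications $0\leq u_i\leq b_i$ and $s_i\leq\beta_i$ fall out of the construction itself: the inequality $s_i-s_{i-1}\leq b_i$ is immediate from the definition; the monotonicity $s_{i-1}\leq s_i$ follows inductively since $s_{i-1}\leq \beta_{i-1}\leq\beta_i$ and $s_{i-1}\leq s_{i-1}+b_i$, so both arguments of the minimum are at least $s_{i-1}$; and $s_i\leq \beta_i$ is built in.

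The crux of the argument is to verify the lower bounds $s_i\geq\alpha_i$. Unrolling the recursion, and using the hypothesis $\beta_1=b_1$ to absorb the contribution from $s_0=0$, one arrives at the closed form
\[
s_i \;=\; \min_{1\leq k\leq i}\bigl(\beta_k+b_{k+1}+\cdots+b_i\bigr).
\]
The hypothesis of the lemma, applied with the lemma's $j$ equal to $i$, says precisely that each term on the right is at least $\alpha_i$, so $s_i\geq\alpha_i$ as required, and $u_i=s_i-s_{i-1}$ is the desired integral solution. The only genuine insight is to identify this closed form for $s_i$ and to notice that the hypothesis is perfectly tailored to bound every one of its summands below by $\alpha_i$; everything else is bookkeeping.
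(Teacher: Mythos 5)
Your proof is correct and takes essentially the same approach as the paper: the necessity argument splits the partial sum in exactly the same way, and your greedy recursion $s_i=\min(\beta_i, s_{i-1}+b_i)$ produces precisely the paper's partial sums $s_i=\min_{1\le k\le i}(\beta_k+b_{k+1}+\cdots+b_i)$ (the paper writes the closed form first and notes the recursion as a consequence, whereas you go the other direction). The observation that $\beta_{i-1}\le\beta_i$ is needed for $u_i\ge 0$ also appears in the paper's proof.
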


\begin{proof} Let $\mathbf{u}=(u_1,\ldots,u_n)$ be an integral solution of (\ref{definition}). Then for any $i,j$ with $1\leq i\leq j\leq n$, we have $\alpha_j\leq (u_1+\cdots+u_i)+u_{i+1}+\cdots+u_j\leq \beta_i+b_{j+1}+\cdots+b_n$. This proves the necessity.

Conversely, assume that   $\beta_i+b_{i+1}+\cdots+b_j\geq \alpha_j$ for any $i,j$ with $1\leq i\leq j\leq n$.  Then we set $u_1=\beta_1$, and $$u_{j+1}=\min\{\beta_i+b_{i+1}+\cdots+b_{j+1}| i\leq j+1 \}-(u_1+u_2+\cdots +u_j)$$ for $j=1,\ldots,n-1$. It is enough to show that $\mathbf{u}:=(u_1,\ldots,u_n)$ is an integral solution of (\ref{definition}).

Since $u_1+u_2+\cdots+u_j=\min\{\beta_i+b_{i+1}+\cdots+b_{j}| i\leq j\}$, it follows that $\alpha_j\leq u_1+u_2+\cdots+u_j\leq \beta_j$ for $j=1,\ldots, n$. In addition, we have  $$0\leq u_{j+1}=\min\{u_1+u_2+\cdots+u_j+b_{j+1}, \beta_{j+1}\}-u_1+u_2+\cdots+u_j\leq b_{j+1}$$
for $j=1,\ldots,n-1$. Here, to see $u_{j+1}\geq 0$, we need the assumption that $\beta_{j+1}\geq \beta_j$.  Hence, the vector $\mathbf{u}$ is indeed an integral solution of (\ref{definition}).
\end{proof}

\begin{Remark} \label{solutionr} \em In view of the proof of  Lemma~\ref{solution}, we see that it still holds if the condition that $\alpha_1\leq \alpha_2\leq \cdots \leq \alpha_n$ is dropped.
\end{Remark}

In what follows, we keep the assumptions on $\mathbf{b}=(b_1,\ldots,b_n), {\bm \alpha}=(\alpha_1,\ldots,\alpha_n), {\bm \beta}=(\beta_1,\ldots,\beta_n)$ stated before. In particular, $\alpha_n=\beta_n=d$.

\begin{Proposition} \label{soc1} Let $I$ be a basic PLP-polymatroidal ideal of type $$(\mathbf{0}, (b_1,\ldots,b_n)|(\alpha_1,\ldots,\alpha_n),(\beta_1,\ldots,\beta_n)).$$
Then $\mathrm{soc}(I)$ is a basic PLP-polymatroidal ideal of type \begin{equation} \tag{*}\label{soc}
(\mathbf{0}, (b_1-1,\ldots,b_n-1)|(\alpha_1,\ldots,\alpha_{n-1},\alpha_n-1),(\beta_1-1,\ldots,\beta_n-1)).
\end{equation}
\end{Proposition}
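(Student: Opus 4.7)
The approach is to directly compare the monomial generators of $\mathrm{soc}(I)$ with those of the claimed basic PLP-polymatroidal ideal on both sides. By Proposition~\ref{d-1}, the ideal $\mathrm{soc}(I)$ is generated in degree $d-1$, and since $I$ is generated in degree $d$ a monomial $\mathbf{x}^{\mathbf{v}}$ of degree $d-1$ automatically lies outside $I$. Hence $\mathbf{x}^{\mathbf{v}}\in G(\mathrm{soc}(I))$ precisely when $x_i\mathbf{x}^{\mathbf{v}}\in I$ for every $i\in[n]$, and since $x_i\mathbf{x}^{\mathbf{v}}$ has degree $d$ this amounts to requiring that the exponent vector $\mathbf{v}+\mathbf{e}_i$ satisfies the defining system (\ref{definition}) of $I$ for each $i$.

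The plan is then to translate the conjunction ``(\ref{definition}) holds for $\mathbf{v}+\mathbf{e}_i$ for every $i$'' into a single system of inequalities on $\mathbf{v}$ alone. Writing $S_k=v_1+\cdots+v_k$, the upper-bound constraints $v_j+\delta_{ij}\leq b_j$ collapse (the tight case being $i=j$) to $0\leq v_j\leq b_j-1$ for every $j$. The partial-sum constraints become $\alpha_k\leq S_k+c\leq \beta_k$, where $c=1$ if $i\leq k$ and $c=0$ otherwise. For $1\leq k<n$, both values of $c$ are realised as $i$ ranges over $[n]$, so the intersection of the constraints sharpens to $\alpha_k\leq S_k\leq \beta_k-1$. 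For $k=n$ only $c=1$ occurs, giving $\alpha_n-1\leq S_n\leq \beta_n-1$, i.e.\ $S_n=d-1$. Comparing with the inequalities (\ref{definition}) attached to the tuple displayed in (\ref{soc}), one sees directly that the generators of $\mathrm{soc}(I)$ are precisely the generators of the basic PLP-polymatroidal ideal of that type.

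To close the argument I would address the degenerate situations in which the shifted tuple (\ref{soc}) formally violates the required monotonicity or the bound $\alpha'_k\leq \beta'_k$---for example when $\alpha_{n-1}=\alpha_n$, or when $\alpha_k=\beta_k$ for some $k<n$. In each such case the inequalities derived above admit no non-negative integer solution, so by the same token no monomial of degree $d-1$ can be multiplied by every $x_i$ into $I$; consequently $\mathrm{soc}(I)$ and the ideal of type (\ref{soc}) both vanish and the statement holds trivially. Lemma~\ref{solution} together with Remark~\ref{solutionr} provides a clean feasibility criterion to verify this uniformly.

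The computation is otherwise mechanical; the delicate step, and the one I would check most carefully, is the boundary index $k=n$, where the sum condition is one-sided and produces the single shift $\alpha_n\mapsto \alpha_n-1$ that distinguishes $\bm{\alpha}'$ from the uniform downward shift applied to $\bm{\beta}'$. This asymmetry is exactly what gives the type (\ref{soc}) its somewhat unusual form.
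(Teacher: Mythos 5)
Your proposal is correct and follows essentially the same route as the paper: you observe that $\mathbf{x}^{\mathbf v}$ of degree $d-1$ lies in $\mathrm{soc}(I)$ iff $\mathbf{v}+\mathbf{e}_i$ satisfies the defining system for all $i$, and then intersect those shifted constraints over $i$ to obtain exactly the system defining the type $(\mathbf{0},\mathbf{b}-\mathbf{1}\mid(\alpha_1,\ldots,\alpha_{n-1},\alpha_n-1),\bm{\beta}-\mathbf{1})$ — the paper extracts the lower bound from $i=n$ and the upper bound from $i=k$, which is the same intersection. Your explicit treatment of the degenerate case (where the shifted tuple formally violates $\alpha'_k\le\beta'_k$ or monotonicity, so both sides are the zero ideal) is a sensible supplement that the paper leaves implicit.
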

\begin{proof} We use $\mathbf{u}\in (\ref{definition})$ to denote the condition that  $\mathbf{u}$ is an integral solution of the system (\ref{definition}) for any $\mathbf{u}\in \mathbb{Z}_+^n$. Note that $a_1=\cdots=a_n=0$ in (\ref{definition}) by our assumption.

Denote by $J$ the PLP-polymatroidal ideal of type (\ref{soc}). We need to show $\mathrm{soc}(I)=J$. Let $\mathbf{x^v}$ be a minimal generator of $\mathrm{soc}(I)$. Then $v_1+\cdots+v_n=d-1$ and   $\mathbf{v+e_i}\in (\ref{definition})$ for any $i\in [n]$, where $\mathbf{e_1},\ldots,\mathbf{e_n}$ is the canonical basis of $\mathbb{Z}_+^n$.   Fix  $2\leq i\leq n-1$.  Since  $\mathbf{v+e_n}\in (\ref{definition})$, we have $v_1+\cdots+v_i=(\mathbf{v+e_n})([i])\geq \alpha_i$. Here, for a vector $\mathbf{u}=(u_1,\ldots,u_n)\in \mathbb{Z}_+^n$, $\mathbf{u}([i])$ means the number $u_1+\cdots+u_i$.   On the other hand, we have $v_1+\cdots+v_i=(\mathbf{v+e_i})([i])-1\leq \beta_i-1$, since $\mathbf{v+e_i}\in (\ref{definition})$.  Thus, $\alpha_i\leq v_1+\cdots+v_i\leq \beta_i-1$ for $i=2,\ldots,n-1$. It is also clear  that $0\leq v_i\leq b_i-1$ for $i=1,\ldots,n$. This implies $\mathrm{soc}(I)\subseteq J$.

  Conversely, if $\mathbf{x}^{\mathbf{v}}$ is a minimal generator of $J$,  then it is routine to check that $\mathbf{v+e_i}\in (\ref{definition})$ for all $i=1,\ldots,n$. Hence $J\subseteq \mathrm{soc}(I)$, as desired.\end{proof}

\begin{Corollary} \label{depth}  Let $I$ be a basic PLP-polymatroidal ideal of type $$(\mathbf{0}, (b_1,\ldots,b_n)|(\alpha_1,\ldots,\alpha_n),(\beta_1,\ldots,\beta_n)).$$ Then
$\depth (S/I)=0$ if and only if the following inequalities hold:
\begin{flalign*}
& \qquad \qquad b_i\geq 1   \mbox{ \quad for all }  i=1,\ldots,n; &
\\    & \qquad \qquad   \alpha_i\leq \beta_i-1 \mbox{\quad for \quad } i=2,\ldots,n-1;&
\\&  \qquad \qquad  \beta_i+b_{i+1}+\cdots+b_j\geq \alpha_j+j-i+1 \mbox{ \quad for all } 1\leq i\leq j\leq n-1; &
\\    &  \qquad \qquad  \beta_i+b_{i+1}+\cdots+b_n\geq d+n-i \mbox{ \quad for all } 1\leq i\leq n.&
\end{flalign*}
\end{Corollary}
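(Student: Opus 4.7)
The plan is to reduce the corollary to the question of when $\soc(I)$ is nonzero, and then apply Lemma~\ref{solution} to the explicit PLP-polymatroidal description of $\soc(I)$ furnished by Proposition~\ref{soc1}.

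First I would observe that $\depth(S/I)=0$ is equivalent to $\soc(I)\neq 0$. Indeed, $\depth(S/I)=0$ holds iff $\mm\in\Ass(S/I)$, which amounts to $(I:\mm)\neq I$. Since $I$ is polymatroidal and hence has a $d$-linear resolution, Proposition~\ref{d-1} gives $(I:\mm)=I+\soc(I)$ with $\soc(I)$ generated in degree $d-1$. Because $I$ is generated in degree $d$, any containment $\soc(I)\subseteq I$ forces $\soc(I)=0$, so $(I:\mm)\neq I$ is equivalent to $\soc(I)\neq 0$.

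Next I would invoke Proposition~\ref{soc1} to identify $\soc(I)$ as the basic PLP-polymatroidal ideal of type $(\mathbf{0},\mathbf{b}'\,|\,\bm{\alpha}',\bm{\beta}')$ with $b'_i=b_i-1$, $\alpha'_j=\alpha_j$ for $j<n$, $\alpha'_n=d-1$, and $\beta'_i=\beta_i-1$. Hence $\soc(I)\neq 0$ iff the associated system of inequalities admits an integer solution. Two prerequisites must be checked before Lemma~\ref{solution} can be applied: $b'_i\geq 0$ for all $i$, which is exactly condition~1 ($b_i\geq 1$), and $\alpha'_i\leq\beta'_i$ for all $i$. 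The latter reduces to $b_1\geq 1$ when $i=1$, is automatic when $i=n$ since $\alpha'_n=d-1=\beta'_n$, and for $i=2,\ldots,n-1$ becomes condition~2 ($\alpha_i\leq\beta_i-1$).

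Finally I would apply Lemma~\ref{solution} to the primed system, using Remark~\ref{solutionr} to justify the application in spite of the fact that $\bm{\alpha}'$ need not be monotone (since $\alpha'_n=\alpha_n-1$ may be smaller than $\alpha'_{n-1}=\alpha_{n-1}$). The lemma then says a solution exists iff $\beta'_i+b'_{i+1}+\cdots+b'_j\geq \alpha'_j$ for all $1\leq i\leq j\leq n$. For $1\leq i\leq j\leq n-1$, absorbing the $j-i+1$ unit shifts (one from $\beta'_i$, one from each of $b'_{i+1},\ldots,b'_j$) rewrites this as condition~3, namely $\beta_i+b_{i+1}+\cdots+b_j\geq \alpha_j+(j-i+1)$; for $j=n$, using $\alpha'_n=d-1$, it becomes condition~4, $\beta_i+b_{i+1}+\cdots+b_n\geq d+(n-i)$. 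The only real obstacle is the careful bookkeeping of the shifts in $\mathbf{b}'$, $\bm{\alpha}'$, $\bm{\beta}'$ and the invocation of Remark~\ref{solutionr} to bypass the non-monotonicity of $\bm{\alpha}'$; beyond that, the argument is a clean chain of equivalences.
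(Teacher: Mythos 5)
Your proposal is correct and follows exactly the route the paper takes: the paper's proof is a single sentence invoking the equivalence $\depth(S/I)=0 \iff \soc(I)\neq 0$ together with Proposition~\ref{soc1} and Lemma~\ref{solution}, and you have simply unpacked the same chain of reasoning in detail (including the correct use of Remark~\ref{solutionr} to handle the non-monotone $\bm{\alpha}'$ and the careful counting of the $j-i+1$ unit shifts).
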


\begin{proof} Since $\depth (S/I)=0$ if and only if $\mathrm{soc}(I)\neq 0$, this result follows immediately by Proposition~\ref{soc1} together with Lemma~\ref{solution}.
\end{proof}

We now clarify when $\Soc(I)\neq 0$ in the case that $I$ is a basic PLP-polymatroidal ideal.
\begin{Corollary}  Let $I$ be a basic PLP-polymatroidal ideal of type $$(\mathbf{0}, (b_1,\ldots,b_n)|(\alpha_1,\ldots,\alpha_n),(\beta_1,\ldots,\beta_n)).$$ Then
the following statements are equivalent:

$\mathrm{(a)}$ $\Soc^*(I)\neq 0$;

$\mathrm{(b)}$ $S/I^m$ has limit depth zero, namely, $\lim_{m\rightarrow \infty}(S/I^m)=0$;

$\mathrm{(c)}$ $\ell (I)=n$;

$\mathrm{(d)}$  the following inequalities hold at the same time:
\begin{flalign*}
&\qquad \qquad  b_i\geq 1 \mbox{\quad for \quad } i=1,\ldots,n; &
\\ &\qquad \qquad \alpha_i\leq \beta_i-1 \mbox{\quad for \quad } i=2,\ldots,n-1;&\\
& \qquad \qquad \beta_i+b_{i+1}+\cdots+b_j> \alpha_j \mbox{\quad  for \quad} 1\leq i< j\leq n.&
\end{flalign*}
\end{Corollary}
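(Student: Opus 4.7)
My plan is to dispatch the three equivalences (a)$\iff$(b)$\iff$(c) by invoking known results for polymatroidal ideals, and to concentrate the real work on (a)$\iff$(d), which is purely combinatorial. The implication (a)$\iff$(c) is exactly \cite[Corollary~3.5]{H-R-V}, cited earlier in this paper. The equivalence (b)$\iff$(c) follows from the well-known fact that for a polymatroidal ideal $I$ the limiting depth $\lim_{m\to\infty}\depth(S/I^m)$ equals $n-\ell(I)$, so the limit vanishes precisely when $\ell(I)=n$; this also yields the transitive link (a)$\iff$(b).

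For (a)$\iff$(d) the key idea is to apply Proposition~\ref{soc1} to the \emph{powers} of $I$ rather than to $I$ itself. The first step is therefore a scaling lemma: if $I$ is a basic PLP-polymatroidal ideal of type $(\mathbf{0},\mathbf{b}\mid\bm{\alpha},\bm{\beta})$, then for every $m\geq 1$ the power $I^m$ is a basic PLP-polymatroidal ideal of scaled type $(\mathbf{0},m\mathbf{b}\mid m\bm{\alpha},m\bm{\beta})$. The inclusion of $I^m$ in the scaled PLP-polymatroidal ideal is immediate, because the sum of $m$ exponent vectors satisfying (\ref{definition}) satisfies the $m$-scaled version. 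The reverse inclusion I expect to be the main obstacle; my plan is to establish it by an inductive extraction argument, peeling off from any integer vector $\mathbf{w}$ satisfying the $m$-scaled system a single vector $\mathbf{u}\in(\ref{definition})$ with $\mathbf{w}-\mathbf{u}$ satisfying the $(m-1)$-scaled system, guided by the explicit greedy construction used in the proof of Lemma~\ref{solution}; alternatively the scaling lemma can be extracted from the characterisation of PLP-polymatroidal ideals in \cite{L}.

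Assuming this scaling lemma, Proposition~\ref{soc1} applied to $I^m$ identifies $\soc(I^m)$ as the PLP-polymatroidal ideal of type
\[
\bigl(\mathbf{0},\,(mb_1-1,\ldots,mb_n-1)\mid(m\alpha_1,\ldots,m\alpha_{n-1},md-1),\,(m\beta_1-1,\ldots,m\beta_n-1)\bigr).
\]
By Remark~\ref{solutionr} and Lemma~\ref{solution}, this ideal is nonzero iff the scaled counterparts of the inequalities of Corollary~\ref{depth} hold: $mb_i\geq 1$ for all $i$; $m(\beta_i-\alpha_i)\geq 1$ for $i=2,\ldots,n-1$; $m(\beta_i+b_{i+1}+\cdots+b_j-\alpha_j)\geq j-i+1$ for $1\leq i\leq j\leq n-1$; and $m(\beta_i+b_{i+1}+\cdots+b_n-d)\geq n-i$ for $1\leq i\leq n$.

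Finally, $\Soc^*(I)\neq 0$ amounts to these inequalities being satisfiable for \emph{some} $m$. Dividing by $m$ and taking $m$ large the constant right-hand sides become negligible, and the conditions collapse to $b_i\geq 1$ for all $i$, $\alpha_i<\beta_i$ for $i=2,\ldots,n-1$, and the strict inequalities $\beta_i+b_{i+1}+\cdots+b_j>\alpha_j$ for $1\leq i<j\leq n$ (with the convention $\alpha_n=d$); conversely, once these strict inequalities and bounds hold, choosing $m$ sufficiently large makes the scaled system solvable. This is exactly (d). The decisive technical step throughout is the scaling lemma; once it is in place, the rest is a direct translation via Corollary~\ref{depth} and Lemma~\ref{solution}.
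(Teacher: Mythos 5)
Your proposal is essentially the same as the paper's argument, and it is correct.  The paper disposes of (a)$\iff$(b) as obvious, obtains (b)$\iff$(c) from \cite[Corollary~2.5]{H-R-V}, and proves (b)$\iff$(d) by citing \cite[Proposition~2.10]{L} for the fact that $I^m$ is basic PLP-polymatroidal of type $(\mathbf{0},m\mathbf{b}\mid m\bm{\alpha},m\bm{\beta})$ and then invoking Corollary~\ref{depth}.  You route the first two equivalences slightly differently (citing \cite[Corollary~3.5]{H-R-V} for (a)$\iff$(c) and the limiting-depth formula $\lim_m\depth(S/I^m)=n-\ell(I)$ for (b)$\iff$(c)), but these are just different slices of the same results from \cite{H-R-V}, and your (a)$\iff$(d) computation matches the paper's (b)$\iff$(d).

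The one point worth flagging is that you describe the scaling lemma $I^m=$ PLP of type $(\mathbf{0},m\mathbf{b}\mid m\bm{\alpha},m\bm{\beta})$ as ``the main obstacle'' and sketch an inductive extraction proof for it.  You do not need to prove it: it is precisely \cite[Proposition~2.10]{L}, and the paper (both here and again in the proof of Theorem~\ref{mainPLP}) simply cites it.  So your worry that the reverse inclusion is the hard part is unfounded in this context, and the greedy peeling argument you sketch is superfluous.  Everything else in your write-up — applying Proposition~\ref{soc1} to $I^m$, translating via Lemma~\ref{solution} and Remark~\ref{solutionr}, and observing that after dividing by $m$ and letting $m\to\infty$ the non-strict inequalities of Corollary~\ref{depth} turn into the strict inequalities of (d) — is correct and amounts to spelling out the step that the paper compresses into ``From this as well as Corollary~\ref{depth}, the result follows.''
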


\begin{proof} $(a)\Leftrightarrow (b)$ is obvious and $(b)\Leftrightarrow (c)$ follows from \cite[Corollary 2.5]{H-R-V}.

 $(b)\Leftrightarrow (d)$   According to  \cite[Proposition 2.10]{L}, $I^m$ is a basic PLP-polymatroidal ideal of type $(\mathbf{0},m\mathbf{b}|m{\bm \alpha}, m{\bm \beta})$. From this as well as Corollary~\ref{depth}, the result follows.\end{proof}

\begin{Example}\label{3.7} \em  Let $t\geq 2$ and let $I=(x^t, xy^{t-2}z, y^{t-1}z)\subseteq S=K[x,y,z]$. Then $\depth (S/I^n)$ is 1  if $n\leq t-1$ and is 0 if $n\geq t$, according to \cite[Proposition 2.5]{M-S-T}. This implies $\Soc^*(I)$ is generated in degrees $\geq t-1$.
\end{Example}

Example~\ref{3.7} demonstrates that   $\Soc^*(I)$ may be  generated in degrees far larger than $n$.    But in the case that $I$ is a PLP-polymatroidal ideal, the situation is different.

\begin{Theorem}\label{mainPLP} Let $I$ be a PLP-polymatroidal ideal of type $$(\mathbf{0}, (b_1,\ldots,b_n)|(\alpha_1,\ldots,\alpha_n),(\beta_1,\ldots,\beta_n)).$$ Then $\Soc^*(I)$ and $\Soc(I)$ are generated in degrees $< n$.
\end{Theorem}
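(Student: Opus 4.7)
The plan is to reduce the theorem to the single identity $\soc(I^m)=I\cdot\soc(I^{m-1})$ for all $m\ge n$: once this is established, every minimal generator of $\Soc^*(I)$ must lie in a degree $<n$, and the statement for $\Soc(I)$ then follows by reducing modulo $\mm$ since $\Soc(I)\cong \Soc^*(I)/\mm\Soc^*(I)$. I may assume that $I$ is basic (any PLP-polymatroidal is a monomial times a basic one, which does not shift socle generation degrees) and that $\Soc^*(I)\neq 0$, so that all the inequalities in part (d) of the Corollary characterising $\Soc^*(I)\neq 0$ (the one just before Example~\ref{3.7}) are available.

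The first step is to identify the PLP type of $\soc(I^m)$ and verify that $\soc(I^{n-1})\neq 0$. By \cite[Proposition~2.10]{L}, $I^m$ is basic PLP of type $(\mathbf{0},m\mathbf{b}\,|\,m\bm\alpha,m\bm\beta)$, so by Proposition~\ref{soc1} the ideal $\soc(I^m)$ is basic PLP of type
\[
\bigl(\mathbf{0},\; m\mathbf{b}-\mathbf{1}\;\big|\;(m\alpha_1,\ldots,m\alpha_{n-1},md-1),\; m\bm\beta-\mathbf{1}\bigr).
\]
Applying Lemma~\ref{solution} to this type, non-vanishing of $\soc(I^m)$ becomes equivalent to $m(\beta_i+b_{i+1}+\cdots+b_j-\alpha_j)\ge j-i+1$ for $1\le i\le j\le n-1$ together with $m(\beta_i+b_{i+1}+\cdots+b_n-d)\ge n-i$ for $1\le i\le n-1$. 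Each parenthesised quantity is $\ge 1$ by the hypothesis on $\Soc^*(I)$, so both families hold already at $m=n-1$; hence $\soc(I^m)\neq 0$ for every $m\ge n-1$.

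The main step is the identity $\soc(I^m)=I\cdot\soc(I^{m-1})$ for $m\ge n$. The inclusion $\supseteq$ is immediate by coordinatewise addition of the PLP bounds: if $\mathbf{x}^{\mathbf{w}}\in G(I)$ and $\mathbf{x}^{\mathbf{u}}\in G(\soc(I^{m-1}))$, then $\mathbf{w}+\mathbf{u}$ automatically satisfies the defining system of $\soc(I^m)$ recorded above. For the reverse inclusion, given a minimal generator $\mathbf{x}^{\mathbf{v}}\in\soc(I^m)$ I would construct $\mathbf{w}\in G(I)$ with $\mathbf{v}-\mathbf{w}\in G(\soc(I^{m-1}))$ by solving the system
\begin{align*}
\max(0, v_i-(m-1)b_i+1)&\le w_i \le \min(b_i, v_i),\\
\max(\alpha_i, V_i-(m-1)\beta_i+1)&\le W_i \le \min(\beta_i, V_i-(m-1)\alpha_i) \quad (i<n),\\
W_n&=d,
\end{align*}
where $V_i=v_1+\cdots+v_i$ and $W_i=w_1+\cdots+w_i$. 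After the shift $\tilde w_i=w_i-\max(0,v_i-(m-1)b_i+1)$, Lemma~\ref{solution} together with Remark~\ref{solutionr} reduces solvability to the family of inequalities
\[
\min(\beta_i,V_i-(m-1)\alpha_i)+\sum_{k=i+1}^{j}\min(b_k,v_k)\;\ge\; \max(\alpha_j,V_j-(m-1)\beta_j+1) \qquad (1\le i\le j\le n).
\]

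The main obstacle is the case analysis implicit in this last family: each of the four branches (depending on which argument of each $\min/\max$ is active) must be handled, and in each branch the inequality should reduce either to the nonemptiness condition $\beta_i+b_{i+1}+\cdots+b_j\ge\alpha_j$ for $I$, or to a consequence of the defining constraints $m\alpha_i\le V_i\le m\beta_i-1$ that $\mathbf{v}$ inherits as a generator of $\soc(I^m)$. The delicacy comes from the fact that the shifted bounds need not be monotone in $i$, so beyond the generality of Remark~\ref{solutionr} one must also accommodate non-monotone $\bm\beta$ (equivalently, replace each $\beta_i$ by $\min_{k\ge i}\beta_k$) and carefully track the interplay between the bounds on $\mathbf{v}$ and those on $\mathbf{w}$. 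Equivalently, one may invoke the polymatroid Minkowski-sum principle to identify the base polytope of $I\cdot\soc(I^{m-1})$ with that of $\soc(I^m)$, whose PLP-types coincide by componentwise addition of the factor types.
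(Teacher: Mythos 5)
Your high-level strategy coincides with the paper's: both reduce the theorem to the inclusion $\soc(I^{m})\subseteq I\cdot\soc(I^{m-1})$ for $m\ge n$, identify $\soc(I^m)$ as a basic PLP-polymatroidal ideal via \cite[Proposition~2.10]{L} and Proposition~\ref{soc1}, and set up a linear-inequality system for a factorization of a generator $\mathbf{x}^{\mathbf v}\in G(\soc(I^m))$ into $\mathbf{x}^{\mathbf w}\in G(I)$ and $\mathbf{x}^{\mathbf v-\mathbf w}\in G(\soc(I^{m-1}))$, to be solved via Lemma~\ref{solution} and Remark~\ref{solutionr}. The system you write down for $(w_i,W_i)$ is correct, and your observation that one must pass to $\hat\beta_i=\min_{k\ge i}\tilde\beta_k$ to restore the monotonicity required by Lemma~\ref{solution} is exactly on point. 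Your preliminary check that $\soc(I^{n-1})\neq 0$ is also correct.

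However, there is a genuine gap at the only step that actually matters. You state that the solvability inequalities
\[
\min(\beta_i,V_i-(m-1)\alpha_i)+\sum_{k=i+1}^{j}\min(b_k,v_k)\;\ge\; \max\bigl(\alpha_j,\,V_j-(m-1)\beta_j+1\bigr)
\]
``should reduce either to the nonemptiness condition $\beta_i+b_{i+1}+\cdots+b_j\ge\alpha_j$ or to a consequence of $m\alpha_i\le V_i\le m\beta_i-1$,'' but neither of these inputs involves the standing hypothesis $m\ge n$. That hypothesis is not decoration: Proposition~\ref{single} and the example following it ($I=I_{(3,3,1,2),6}$) show that for PLP (even Veronese-type) ideals $\Soc^*(I)$ is in general \emph{not} equi-generated, i.e.\ $\soc(I^{m})\subsetneq I\cdot\soc(I^{m-1})$ can occur for $m$ between the threshold where $\soc$ becomes nonzero and $n$. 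So an argument that closes using only nonemptiness of $I$ and the inherited constraints on $\mathbf v$ would prove a strictly false statement. Somewhere in your case analysis the bound $m\ge n$ must be invoked, and your sketch does not say where or how.

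This is precisely the part the paper handles by a different bookkeeping: it writes $u_i=(k+1)s_i+t_i$ with $0\le t_i\le k$ (for $k=m-1$), searches for corrections $\lambda_i\in\{0,1\}$, and rephrases everything in the integer quantities $[\cdot]$, $\lfloor\cdot\rfloor$ and the counting function $c_i=|\{j\le i: t_j=k\}|$. The hypothesis $k\ge n-1$ enters twice, and essentially — once as $k\ge n-1\ge c_n-1$ to show the right-hand sides of the system are nonnegative integers, and once as $k\ge n-1\ge c_j-c_i$ to verify the solvability criterion $h_i+e_{i+1}+\cdots+e_j\ge g_j$. Your formulation has no analogue of $c_i$ and no clean integrality argument, and the $\min/\max$ branching you propose involves not just four outer cases but also a branching inside $\sum_k\min(b_k,v_k)$; carrying it through would require reproducing the paper's trick in disguise (or something equally specific). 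As it stands, the central inequality is asserted rather than proved, and the proof does not explain why it fails for small $m$ — which it must.
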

\begin{proof} Since $\Soc(I)\cong \Soc^*(I)/\mm \Soc^*(I)$, we only need to prove  $\Soc^*(I)$  are generated in degrees $< n$.   By \cite[Proposition 2.10]{L} and Proposition~\ref{soc1}, we see that $\mathrm{soc}(I^{\ell})$ is of type $$(\mathbf{0}, ({\ell}b_1-1,\ldots,{\ell}b_n-1)|({\ell}\alpha_1,\ldots,{\ell}\alpha_{n-1}, {\ell}\alpha_n-1),({\ell}\beta_1-1,\ldots,{\ell}\beta_n-1)).$$  It is enough to show that $\mathrm{soc}(I^{k+1})\subseteq I\mathrm{soc}(I^k)$ for all $k\geq n-1$.   Let $\mathbf{u}=(u_1,\ldots,u_n)$ be a vector in $\mathbb{Z}_+^n$ such that $\mathbf{x}^{\mathbf{u}}$ is a minimal generator of $\mathrm{soc}(I^{k+1})$. Then we write $u_i=(k+1)s_i+t_i$ with $0\leq t_i\leq k$  for every $i\in [n]$.  We want to find a suitable vector $(\lambda_1,\ldots,\lambda_n)\in \mathbb{Z}_+^n$ such that $\mathbf{x^w}$ and $\mathbf{x^v}$ belong to $G(I)$ and $ G(\mathrm{soc}(I^k))$ respectively, where  $\mathbf{w}:=(s_1+\lambda_1,\ldots,s_n+\lambda_n)$ and $\mathbf{v}:=(ks_1+t_1-\lambda_1,\ldots, ks_n+t_n-\lambda_n)$. To this end, we may write $\mathbf{w}:=(s_1+\lambda_1,\ldots,s_n+\lambda_n)$ and $\mathbf{v}:=(ks_1+t_1-\lambda_1,\ldots, ks_n+t_n-\lambda_n)$ with $(\lambda_1,\ldots,\lambda_n)\in \mathbb{Z}_+^n$ undetermined.

Since  $(k+1)s_i+t_i\leq (k+1)b_i-1$, we have $s_i\leq b_i-\frac{1+t_i}{k+1}$ and so \begin{equation} \label{L}
\begin{split}
w_i\leq b_i-\frac{1+t_i}{k+1}+\lambda_i \mbox{\quad for \quad} i=1,\cdots,n\\
v_i\leq kb_i-\frac{k(1+t_i)}{k+1}+t_i-\lambda_i  \mbox{\quad for \quad} i=1,\cdots,n
\end{split}
\end{equation}

Set $\Lambda_i=\lambda_1+\cdots+\lambda_i$ for $i=1,\ldots,n$. For $i=2,\ldots,n-1$, since $$(k+1)\alpha_i\leq (k+1)(s_1+\cdots+s_i)+t_1+\cdots+t_i\leq (k+1)\beta_i-1,$$
it follows that $$\alpha_i-\frac{t_1+\cdots+t_i}{k+1}\leq s_1+\cdots+s_i\leq \beta_i-\frac{1+t_1+\cdots+t_i}{k+1}.$$
Hence, for $i=2,\ldots,n-1$, we have
\begin{equation} \label{S}
\begin{split} \alpha_i-\frac{t_1+\cdots+t_i}{k+1}+\Lambda_i \leq w_1+\cdots+w_i\leq \beta_i-\frac{1+t_1+\cdots+t_i}{k+1}+\Lambda_i,\\
k\alpha_i+\frac{t_1+\cdots+t_i}{k+1}-\Lambda_i\leq v_1+\cdots+v_i\leq k\beta_i+\frac{t_1+\cdots+t_i-k}{k+1}-\Lambda_i.
\end{split}
\end{equation}

From the inequalities (\ref{L}) and (\ref{S}),  it is not difficult to see that  if
\begin{flalign}
\label{5} & \qquad \qquad \Lambda_n=\frac{t_1+\cdots+t_n+1}{k+1}=\beta_n-s_1-\cdots-s_n; &
\\ \label{6}   & \qquad \qquad   \frac{t_1+\cdots+t_i}{k+1}-\frac{k}{k+1}<\Lambda_i<1+\frac{t_1+\cdots+t_i}{k+1}, \mbox{\quad} i=2,\ldots,n-1;&
\\ \label{7}   &  \qquad \qquad  \frac{t_i-k}{k+1}<\lambda_i<1+\frac{1+t_i}{k+1}, \mbox{\quad} i=1,\ldots,n. &
\end{flalign}
then  $\mathbf{x^w}\in G(I)$ and $\mathbf{x^v}\in G(\mathrm{soc}(I^k))$.
In what follows, we show that such $\lambda_i$'s exist.

Note that the inequalities in (\ref{7}) are equivalent to $$\lambda_i=1 \mbox{\quad if \quad} t_i=k  \mbox{ \qquad  and \qquad  }0\leq \lambda_i\leq 1 \mbox{\quad if \quad } 0\leq t_i\leq k-1.$$
Let $[a]$ denote the largest integer $<a$ and $\lfloor a\rfloor$ the largest integer $\leq a$. Under  these notations, the above system of inequalities (\ref{5}), (\ref{6}), (\ref{7})  is equivalent to the following system:
\begin{equation}\label{8} \left\{
                   \begin{array}{ll}
                     \lambda_i=1 \mbox{\qquad if \quad} t_i=k  \mbox{ \qquad  and \qquad  }0\leq \lambda_i\leq 1 \mbox{\qquad if \quad } 0\leq t_i\leq k-1; \\
     \\                \lfloor\frac{t_1+\cdots+t_i}{k+1}-\frac{k}{k+1}\rfloor\leq \lambda_1+\cdots+\lambda_i\leq 1+[\frac{t_1+\cdots+t_i}{k+1}] \mbox{\quad} i=1,2,\ldots,n-1; \\
\\
                     \lambda_1+\cdots+\lambda_n=\frac{t_1+\cdots+t_n+1}{k+1}.
                   \end{array}
                 \right.
\end{equation}

\medskip
Set $\epsilon_i=\lambda_i-1$ if $t_i=k$ and $\epsilon_i=\lambda_i$ otherwise. Let $c_i$ denote the number $|\{1\leq j\leq i| t_j=k\}|.$
Then the system (\ref{8}) can be rewritten as:
\begin{equation}
\label{system}
\left\{
  \begin{array}{ll}
    \epsilon_i=0 \mbox{\qquad if \quad} t_i=k  \mbox{ \qquad  and \qquad  }0\leq \epsilon_i\leq 1 \mbox{\qquad if \quad} 0\leq t_i\leq k-1; \\
    \\
   \max\{\lfloor\frac{t_1+\cdots+t_i}{k+1}-\frac{k}{k+1}\rfloor-c_i,0\}\leq \epsilon_1+\cdots+\epsilon_i\leq 1+[\frac{t_1+\cdots+t_i}{k+1}]-c_i; \\
    \\
    \epsilon_1+\cdots+\epsilon_n=\frac{t_1+\cdots+t_n+1}{k+1}-c_n.
  \end{array}
\right.
\end{equation}

Since $k\geq n-1\geq c_n-1$, it follows that $\frac{t_1+\cdots+t_n+1}{k+1}-c_n\geq \frac{c_nk+1}{k+1}-c_n=\frac{1-c_n}{k+1}>-1$ and so $$\frac{t_1+\cdots+t_n+1}{k+1}-c_n\geq 0,$$  because $\frac{t_1+\cdots+t_n+1}{k+1}-c_n$ is an integer. Similarly, we have $$1+[\frac{t_1+\cdots+t_i}{k+1}]-c_i\geq 0 \mbox{\quad for \quad} i=1,\ldots,n-1.$$

For $i=1,\cdots,n$, we set $$h_i=\min\{\frac{t_1+\cdots+t_n+1}{k+1}-c_n, 1+[\frac{t_1+\cdots+t_j}{k+1}]-c_j, j=i,\ldots,n-1\}.$$  Also we denote  $g_n=\frac{t_1+\cdots+t_n+1}{k+1}-c_n$, and $g_i=\lfloor\frac{t_1+\cdots+t_i}{k+1}-\frac{k}{k+1}\rfloor-c_i$  for $i=1,\ldots,n-1.$
Then $h_n\geq h_{n-1}\geq \cdots\geq h_2\geq h_1\geq 0$. We claim that  $h_i\geq g_i$ for $i=1,\ldots,n$. For this, it is enough to prove that
 \begin{equation} \label{e} 1+[\frac{t_1+\cdots+t_j}{k+1}]-c_j\geq \lfloor\frac{t_1+\cdots+t_i}{k+1}-\frac{k}{k+1}\rfloor-c_i
 \end{equation} for  $1\leq i\leq j\leq n$.
 Let $s=c_j-c_i$. Then, since $k\geq n-1\geq s$, we have $$1+[\frac{t_1+\cdots+t_j}{k+1}]\geq \frac{t_1+\cdots+t_i+sk}{k+1}$$$$\geq  \frac{t_1+\cdots+t_i+sk+s-k}{k+1}\geq \frac{t_1+\cdots+t_i-k}{k+1}+s.$$
 This proves the desired inequality (\ref{e}) and so our claim follows.

 Set $e_i=0$ if $t_i=k$ and $e_i=1$ otherwise.  Note that $g_1=0$ and $e_1=h_1$. Hence, the system (\ref{system}) can be rewritten as follows.
\begin{equation}
\label{system1}
\left\{
  \begin{array}{ll}
    0\leq \epsilon_i \leq e_i,  \mbox{\qquad} i=1,\ldots,n; \\
    \\
    \max\{g_i,0\}\leq \epsilon_1+\cdots+\epsilon_i\leq h_i, \mbox{\qquad} i=2,\ldots,n.
  \end{array}
\right.
\end{equation}

In view of Lemma~\ref{solution} and Remark~\ref{solutionr}, to  prove that the system (\ref{system1}) has an integral solution, it is enough to show that
  $h_i+e_{i+1}+\cdots+e_j\geq g_j$ for all pairs
  $n\geq j\geq i\geq 1$. Fix $i,j$ with  $n\geq j\geq i\geq 1$.  If   $i=n$, there is nothing to prove.  If $i\leq n-1$ and $$h_i=\min\{\frac{t_1+\cdots+t_n+1}{k+1}-c_n,  1+[\frac{t_1+\cdots+t_{\ell}}{k+1}]-c_{\ell},\ell=j,\ldots,n-1\},$$  then $h_i=h_j\geq g_j$ and we are  done. So it remains to consider the case when $1\leq i< n$ and when $h_i=1+[\frac{t_1+\cdots+t_{\ell}}{k+1}]-c_{\ell}$ for some $\ell\in \{i,\ldots,j-1\}.$
   Note that $e_{i+1}+\cdots+e_j=j-i-c_j+c_i$. Hence it is enough to show that
  $$1+[\frac{t_1+\cdots+t_{\ell}}{k+1}]-c_{\ell}+j-i-(c_j-c_i)\geq \lfloor\frac{t_1+\cdots+t_j}{k+1}-\frac{k}{k+1}\rfloor-c_j.$$
  Since $c_{\ell}-c_i\leq \ell-i$, it is enough to prove
  $$1+[\frac{t_1+\cdots+t_{\ell}}{k+1}]+j-\ell\geq \lfloor\frac{t_1+\cdots+t_j}{k+1}-\frac{k}{k+1}\rfloor.$$
  But this is clear and so our proof is completed.
\end{proof}

A finitely generated graded module $M$ is called {\it equi-generated} if it generated in a single degree, namely, $M$ is generated in degree $d$, where $d=\min\{i\in \mathbb{Z}:  M_i\neq 0\}$. Finally, we discuss when $\Soc^*(I)$ is equi-generated when $I$ is an ideal of Veronese type.

\medskip
Let $I=I_{(a_1,\ldots,a_n),d}$ be an ideal of Veronese type (cf. \cite{H-R-V}) and let   $\rho$ be the rank function of the  polymatroid corresponding to $I$ (see \cite{H-H-1} or \cite{H-H-b} for the definition and basic properties of rank functions). Denote $(a_1,\ldots,a_n)$ by $\mathbf{a}$. Then we have the following observations.

\begin{enumerate}
  \item $\mathrm{depth}\ S/I=0$ if and only if $a_i\geq 1$ for all $i\in [n]$ and $\sum_{i\in [n]}a_i-d\geq n-1$;
\item $\Soc^*(I)\neq 0$  if and only if $a_i\geq 1$ for all $i\in [n]$ and $\sum_{i\in [n]}a_i>d$;
  \item $\rho(A)=\min\{\mathbf{a}(A), d\}$ for any $A\subseteq [n]$, here $\mathbf{a}(A)=\sum_{i\in A}a_i$;
  \item $I^k=I_{k\mathbf{a}, kd}$ for all $k\geq 1$;
\item $\mathrm{soc} (I)=I_{\mathbf{a-1},d-1}$. Here, $\mathbf{a-1}=(a_1-1,a_2-1,\ldots, a_n-1)$.
\end{enumerate}

\begin{Proposition}
\label{single}
Let $I=I_{(a_1,\ldots,a_n),d}=I_{\mathbf{a},d}$. Suppose that $a_i\geq 1$ for all $i\in [n]$ and $\mathbf{a}([n])>d$. Then the following statements are equivalent:

$(1)$ $\Soc(I)$ is equi-generated;

$(2)$ $\Soc^*(I)$ is equi-generated;

$(3)$  For any $A\subseteq [n]$ with $\mathbf{a}(A)>d$, one has $k_0(\mathbf{a}(A)-d)\geq |A|-1$. Here $k_0$ is the least number $k$ such that $k(\mathbf{a}([n])-d)\geq n-1$ and $|A|$ denotes the number of elements of $A$.
\end{Proposition}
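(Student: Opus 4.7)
The equivalence $(1)\Leftrightarrow(2)$ is an instance of graded Nakayama's lemma: from $\Soc(I)\cong \Soc^*(I)/\mm\Soc^*(I)$ together with $\mathcal{F}(I)=R(I)/\mm R(I)$, the Rees-degrees of a minimal homogeneous generating set of $\Soc^*(I)$ over the Rees ring $R(I)$ agree with those of the induced minimal generating set of $\Soc(I)$ over $\mathcal{F}(I)$, so equi-generatedness is equivalent for the two modules. The bulk of the work is $(2)\Leftrightarrow(3)$.

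\textbf{Reduction to a flow problem.} By observations (4)--(5) above, $\soc(I^k)=I_{k\mathbf{a}-\mathbf{1},\,kd-1}$ for every $k\ge k_0$, and since every power of $I$ has linear resolution, $\Soc^*(I)$ is equi-generated (necessarily in degree $k_0$) iff $\soc(I^k)=I\cdot\soc(I^{k-1})$ for all $k\ge k_0+1$; the reverse containment is automatic. Written monomial-wise, for each $\mathbf{w}\in\mathbb{Z}_{\ge 0}^n$ with $w_i\le ka_i-1$ and $|\mathbf{w}|=kd-1$, one must produce $\mathbf{u}\in\mathbb{Z}_{\ge 0}^n$ with $|\mathbf{u}|=d$ and
\[
L_i(\mathbf{w}):=\max\{0,\,w_i-(k-1)a_i+1\}\le u_i\le \min\{a_i,w_i\}=:U_i(\mathbf{w})
\]
for every $i$. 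After the substitution $u_i'=u_i-L_i$, the integer-flow argument underlying Lemma~\ref{solution} (see also Remark~\ref{solutionr}) shows that such $\mathbf{u}$ exists iff $\sum_i L_i(\mathbf{w})\le d\le \sum_i U_i(\mathbf{w})$. A case split on $A=\{i:w_i\ge a_i\}$, distinguishing $\mathbf{a}(A)\ge d$ from $\mathbf{a}(A)<d$, shows $\sum_i U_i(\mathbf{w})\ge d$ unconditionally. Hence $(2)$ is equivalent to the assertion that $\sum_i L_i(\mathbf{w})\le d$ for every valid $\mathbf{w}$ and every $k\ge k_0+1$.

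\textbf{Matching with (3).} Parametrize valid $\mathbf{w}$ by its activation set $B=\{i:w_i\ge (k-1)a_i\}$; such a $B$ can arise only when $(k-1)\mathbf{a}(B)\le kd-1$, and in that case the maximum of $\sum_i L_i$ over valid $\mathbf{w}$ with that activation set equals $\min\{k\mathbf{a}(B)-|B|,\,kd-1\}-(k-1)\mathbf{a}(B)+|B|$. Working out both regimes of the $\min$, the bound $\le d$ simplifies in either case to the single inequality
\[
(k-1)(\mathbf{a}(B)-d)\ge |B|-1 \quad\text{for every activation-feasible }B\text{ with }\mathbf{a}(B)>d.
\]
If (3) holds, then for $k\ge k_0+1$ one has $(k-1)(\mathbf{a}(B)-d)\ge k_0(\mathbf{a}(B)-d)\ge |B|-1$, yielding equi-generation at once. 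For the converse, assume (3) fails and pick $B\subseteq[n]$ minimal under inclusion with $\mathbf{a}(B)>d$ and $k_0(\mathbf{a}(B)-d)<|B|-1$. Minimality forces $a_i\ge \mathbf{a}(B)-d$ for every $i\in B$ (otherwise removing such an $i$ would yield a smaller violator), so $\mathbf{a}(B)\ge |B|(\mathbf{a}(B)-d)$; since $\mathbf{a}(B)-d\ge 1$ this forces $|B|\le d+1$, and hence $k_0\mathbf{a}(B)\le k_0 d+|B|-2\le (k_0+1)d-1$, i.e.\ $B$ is activation-feasible at $k=k_0+1$. The failure of the displayed inequality at this $B$ then produces a concrete $\mathbf{w}$ with $\sum_i L_i(\mathbf{w})>d$, breaking $\soc(I^{k_0+1})=I\cdot\soc(I^{k_0})$ and hence equi-generation. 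The main obstacle is this minimal-violator step: the output $|B|\le d+1$ is the numerical coincidence that lets one replace the activation-feasibility restriction coming out of the flow analysis by the uniform quantifier over all $A\subseteq [n]$ appearing in (3).
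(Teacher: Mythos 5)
Your proof is correct, and the $(1)\Leftrightarrow(2)$ step is the same graded Nakayama observation used in the paper, but the core equivalence $(2)\Leftrightarrow(3)$ follows a genuinely different route. The paper's argument is rank-function calculus: it uses that $I\soc(I^k)$ and $\soc(I^{k+1})$ are polymatroidal of Veronese type whose rank functions are explicit, namely $\min\{\mathbf a(A),d\}+\min\{k\mathbf a(A)-|A|,\,kd-1\}$ versus $\min\{(k+1)\mathbf a(A)-|A|,\,(k+1)d-1\}$ (the first being a sum because the rank function of a product of polymatroidal ideals is the sum of the rank functions). Since two polymatroidal ideals generated in the same degree coincide iff their rank functions agree, (2) collapses to equality of these two $\min$-expressions for every $A$, which a short case split on $\mathbf a(A)\lessgtr d$ converts to condition (3), and a single violating $A$ gives the rank-function mismatch at $k=k_0$ for free. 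Your route avoids rank functions entirely: you rewrite $\soc(I^{k+1})\subseteq I\soc(I^k)$ as a coordinate-wise interval-feasibility condition on exponent vectors, reduce it to $\sum_i L_i(\mathbf w)\le d$, and optimize over activation sets $B$. This is more elementary and self-contained, but the cost is exactly what you flag as the main obstacle: the flow analysis naturally produces the inequality $(k-1)(\mathbf a(B)-d)\ge |B|-1$ only over activation-feasible $B$, and you need the additional minimal-violator argument (forcing $a_i\ge \mathbf a(B)-d$ on $B$, hence $|B|\le d+1$, hence activation-feasibility at $k=k_0+1$) to upgrade it to the unrestricted quantifier in (3). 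The rank-function approach sidesteps that bridge entirely because rank functions are compared over all subsets by definition. Both proofs are valid; the paper's is shorter granting the polymatroid machinery, yours makes the underlying combinatorics more visible.
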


\begin{proof}

$(1)\iff (2)$  This follows from Nakayama Lemma as well as the isomorphism $\Soc(I)\cong \Soc^*(I)/ \mm \Soc^*(I)$.

$(2)\Rightarrow (3)$  We only need to prove that if  there exits
$A\subseteq [n]$ such that  $\mathbf{a}(A)>d$    and  $k_0(\mathbf{a}(A)-d)< |A|-1$, then $I\mathrm{soc}(I^{k_0})\neq \mathrm{soc}(I^{k_0+1})$.

Let $\rho_1$ and $\rho_2$ be the rank functions of   $I\mathrm{soc}(I^{k_0})$ and $\mathrm{soc}(I^{k_0+1})$ respectively. Then $$\rho_1(A)=\min\{\mathbf{a}(A),d\}+\min\{k_0\mathbf{a}(A)-|A|, k_0d-1\}=d+k_0\mathbf{a}(A)-|A|.$$
Here, the second equality follows by the choice of the subset $A$.
On the other hand, $\rho_2(A)=\min\{(k_0+1)\mathbf{a}(A)-|A|,(k_0+1)d-1\}.$
We claim that $$d+k_0\mathbf{a}(A)-|A|\notin \{(k_0+1)\mathbf{a}(A)-|A|,(k_0+1)d-1\}.$$
In fact, if $d+k_0\mathbf{a}(A)-|A|=(k_0+1)\mathbf{a}(A)-|A|$ then $\mathbf{a}(A)=d$. This is contradicted to our assumption on $A$;
if $d+k_0\mathbf{a}(A)-|A|=(k_0+1)d-1$, then $k_0(\mathbf{a}(A)-d)=|A|-1$. This is also contradicted to our assumption on $A$. Thus, our claim has been proved.
From this claim it follows that $\rho_1(A)\neq \rho_2(A)$ and $I\mathrm{soc}(I^{k_0})\neq \mathrm{soc}(I^{k_0+1})$. Thus $\Soc^*(I)$ is not equi-generated.

$(3)\Rightarrow (2)$ Assume (3) holds. We will prove  $I\mathrm{soc}(I^{k_0})= \mathrm{soc}(I^{k_0+1})$ for all $k\geq k_0$. Fix $k\geq k_0$ and denote by $\rho_1$ and $\rho_2$ the rank functions of $I\mathrm{soc}(I^{k_0})$ and $\mathrm{soc}(I^{k_0+1})$  respectively. It is clear that $$\rho_1(A)=\min\{\mathbf{a}(A),d\}+\min\{k\mathbf{a}(A)-|A|, kd-1\}$$
and $$\rho_2(A)=\min\{(k+1)\mathbf{a}(A)-|A|,(k+1)d-1\}.$$
We now check that $\rho_1(A)=\rho_2(A)$ for all $\emptyset \neq A\subseteq [n]$.

Fix $\emptyset \neq A\subseteq [n]$. If $\mathbf{a}(A)\leq d$, then $\ell \mathbf{a}(A)-|A|\leq  \ell d-1$ for $\ell=k,k+1$, and so $\rho_1(A)=(k+1)\mathbf{a}(A)-|A|=\rho_2(A)$.

If $\mathbf{a}(A)> d$, then $\ell \mathbf{a}(A)-|A|\geq  \ell d-1$ for $\ell=k,k+1$ by (3). This  implies $\rho_1(A)=(k+1)d-1=\rho_2(A)$. Therefore, $\rho_1=\rho_2$ and $I\mathrm{soc}(I^{k_0})= \mathrm{soc}(I^{k_0+1})$ for all $k\geq k_0$, as desired. \end{proof}

In general $\Soc^*(I)$ is not equi-generated even if $I$ is an ideal of Veronese type.

\begin{Example} \em Let $I=I_{(3,3,1,2),6}$. Then $\mathrm{soc}(I)\neq 0$ and so $k_0=1$. Let $A=\{1,2,3\}$. Then $\mathbf{a}(A)=7>6=d$, but $k_0(\mathbf{a}(A)-6)<2=|A|-1$. Hence $\Soc^*(I)$ is not equi-generated by Proposition~\ref{single}.
\end{Example}

{\bf \noindent acknowledgement:}  The paper was written while the first author and the third author were visiting the Department of Mathematics of University
Duisburg–Essen Germany. They want to express their thanks for its hospitality. This research  is supported by  NSFC (No. 11971338).

\end{document}